\newtheorem{theorem}{Theorem}
\newtheorem{lemma}{Lemma}
\newtheorem{remark}{Remark}
\newtheorem{definition}{Definition}
\newtheorem{corollary}{Corollary}
\newtheorem{proposition}{Proposition}
\newcommand\distfun{Cayley distance function}
\newcommand\ClinearC{$\mathcal{C}$--Cayley p.f.~linear--time computable}
\newcommand\linearC{Cayley p.f.~linear--time computable}
\newcommand\CpolyC{$\mathcal{C}$--Cayley polynomial--time computable}
\newcommand\polyC{Cayley polynomial--time computable}
\begin{document}


\begin{frontmatter}

\title{Cayley Polynomial--Time Computable Groups}

\author[label1,label2]{Dmitry Berdinsky}
\ead{berdinsky@gmail.com}

\author[label3]{Murray Elder} 
\ead{murray.elder@uts.edu.au}

\author[label1]{Prohrak Kruengthomya}
\ead{prohrakju@gmail.com}

\address[label1]{Department of Mathematics, Faculty of Science,
 Mahidol University, Bangkok, 10400, Thailand}

\address[label2]{Centre of Excellence in Mathematics, Commission on Higher Education, Bangkok, 10400, Thailand}

\address[label3]{School of Mathematical and Physical Sciences, University of Technology Sydney, Ultimo, NSW 2007, Australia}




\begin{abstract}

We propose a new generalisation of Cayley automatic groups, varying the time complexity of computing multiplication, and language complexity of the normal form representatives.
We first consider groups which have normal form language in the class $\mathcal C$ and multiplication by generators computable in linear time on a certain restricted Turing machine model (position--faithful one--tape).
We show that many of the algorithmic properties of automatic groups are preserved (quadratic time word problem), prove various closure properties, and show that the class is quite large; for example it includes all virtually polycyclic groups.
We then generalise to  groups which have normal form language in the class $\mathcal C$ and multiplication by generators computable in polynomial time on a (standard) Turing machine. Of particular interest is when $\mathcal C=\REG$  (the class of regular languages). We prove that 
$\REG$–Cayley polynomial–time
computable groups include all finitely generated nilpotent groups, the wreath
product $\mathbb Z_2 \wr \mathbb Z^2$, and Thompson’s group $F$.


\end{abstract}

\begin{keyword}
Cayley position--faithful linear--time computable 
group;  
Cayley polynomial–time computable group;
position--faithful one--tape Turning machine;
\distfun\
\end{keyword}

\end{frontmatter}

\section{Introduction}

How one can represent elements of an infinite 
finitely generated group $G$? 
A natural way to do this is to assign for each 
group element $g \in G$ a unique normal form 
which is a string over some finite alphabet 
(not necessarily a generating set).
Kharlampovich, Khoussainov and Miasnikov 
used this approach to introduce the notion 
of a Cayley automatic group \cite{KKM11} 
that naturally extends     
the classical notion of an automatic group 
introduced by Thurston and others \cite{Epsteinbook}. 
They require that the language of normal forms 
to be regular, and that for each $s$ from
some finite set of semigroup generators 
$S \subset G$ there is a 
two--tape synchronous automaton     
recognizing all pairs of strings $(u,v)$  
for which $u$ is the normal form of some 
group element $g$ and $v$ is the normal form 
of the group element $gs$.  
Case, Jain, Seah and Stephan showed that
this   is  equivalent to the existence 
of a {position--faithful}\footnote{see Definition~\ref{defn:posFaith}}
one--tape Turing machine for each $s \in S$ 
which computes the output $v$ from the 
input $u$ in linear time \cite{Stephan_lmcs_13}.  
Is it possible to 
extend the notion of a Cayley automatic 
group which admit normal forms satisfying this (linear time) property?
Can it be extended further requiring not linear but polynomial time?

In this paper we consider groups which 
admit normal forms from some formal language class (not necessarily regular) where multiplication  satisfies the (linear time)  
and (polynomial time) properties. 
We study their algorithmic
and closure properties.  
We analyse examples of such groups and 
their normal forms. 
Furthermore, we study the characterization of these normal 
forms in terms of the \emph{\distfun}\ 
 (as defined in \cite{BT_LATA18} and studied in \cite{eastwest19,BET19}).  
In particular, we investigate examples 
of non--automatic groups for which this  function can be 
diminished to the zero function for some normal forms 
satisfying either (linear time) or (polynomial time) 
properties. This is quite different to the situation for
Cayley automatic representations of these groups, where 
the \distfun\  
is always separated from the 
zero function by some unbounded 
nondecreasing function which depends only on the  group.

{\it Contribution and paper outline.} 
In Section \ref{Cayley_automatic_section} we recall 
the notion of  
a Cayley automatic group 
and a Cayley automatic representation.   

In Section \ref{quasi-Cayley-automatic-section} 
we introduce the notion of a 
\emph{ $\mathcal{C}$--Cayley 
	 position--faithful (p.f.) 
	 linear--time 
	 computable  
 group} and 
a \emph{\ClinearC\
representation},
for a given class of languages $\mathcal{C}$, where we require a language of normal forms to 
be in the class $\mathcal{C}$ and 
 right multiplication by each semigroup 
generator to be computed by a 
position--faithful  one--tape Turing machine 
in linear time. 

We show that  
\ClinearC\
groups preserve some key properties of 
Cayley automatic groups. 
In Theorem 
\ref{quasigeodesicnormalformthm} we show that 
each \ClinearC\
representation has quasigeodesic normal form.  
In Theorem \ref{quadratic_alg_theorem1} we show that 
there is a quadratic time algorithm  
computing this normal form. The latter implies that
for every
\ClinearC\  
group the word problem is decidable in quadratic time, 
see Corollary~\ref{quasiCayleywordproblem}.
In Theorems \ref{finiteextensionthm}, \ref{directproducttheorem} and 
\ref{freeproducttheorem}
we prove that under some very mild restrictions on the class $\mathcal{C}$, the family of \ClinearC\
 groups is closed under taking 
 finite extension, direct product
and  free product, respectively.
Furthermore, in Theorem \ref{CquasiCayley_are_Cgraphautomatic}
we show that (under similar mild restrictions) the family of \ClinearC\ groups is contained in the family of $\mathcal{C}$--graph automatic 
groups introduced by the second author and Taback. 
The collection of all 
\ClinearC\
groups for all classes $\mathcal{C}$ 
forms the family of \emph{\linearC\ groups}.  
In Theorem \ref{subgroupsofquasiCayleyarequasiCayley} 
we show that the family of \linearC\ 
groups
is closed under taking 
finitely generated (f.g.) subgroups. In Theorem 
\ref{L_recursivelyenumerable_quasiautomatic} 
we prove that for each 
\linearC\ 
representation the language of normal forms 
must be recursively enumerable. Moreover, 
in Proposition~\ref{Mikhailova_example} we give 
an example of a \linearC\ 
representation for which the language of
normal forms is not recursive.  
In Theorem~\ref{fgsubgroupglnq} we show that 
the family of \linearC\ groups comprises 
all f.g. subgroups of $\mathrm{GL}(n,\mathbb{Q})$; 
in particular, it includes all polycyclic groups.

In Section \ref{Cayleypoltimecomp_section} 
we consider further generalization of 
\linearC\ 
 groups --   
the notion of a \emph{\CpolyC\
group} and a 
\emph{\CpolyC\
representation}, 
for a given class of languages $\mathcal{C}$, where 
we require a language of normal forms to be in the 
class $\mathcal{C}$ and the right multiplication 
by each semigroup generator to be computed 
by a 
one--tape Turing machine
in polynomial time.  
We note that a $\mathcal{C}$--Cayley 
polynomial--time computable representation 
does not necessary have quasigeodesic normal 
form (in contrast  to the p.f. linear-time case). 
However, assuming that a 
$\mathcal{C}$--Cayley polynomial--time 
computable representation has quasigeodesic 
normal form, in Theorem \ref{normalform_poltime} 
we show that there is a 
polynomial--time algorithm computing this normal 
form. The latter implies that the word problem is 
decidable in polynomial time, see Corollary \ref{wordproblempoltime_cor}. 
In Theorem \ref{Finite_extension_DP_FP_CPTC_thm}  
we prove that, similarly to 
\ClinearC\ 
groups, the families of 
$\mathcal{C}$--Cayley polynomial--time 
computable groups and the ones 
with quasigeodesic 
normal forms are closed 
under taking a finite extension, direct product 
and free product. 
The collection of all $\mathcal{C}$--Cayley 
polynomial--time computable groups for all classes
$\mathcal{C}$ forms the family of \emph{Cayley 
polynomial--time computable groups}. 
In Theorem \ref{fgsubgroups_caypoltime}
we show that the family of Cayley 
polynomial--time computable groups 
and the ones 
with quasigeodesic 
normal forms  are each
closed under taking f.g.
subgroups. 
In the end of Section \ref{Cayleypoltimecomp_section}
we show that the class of 
$\REG$--\polyC\ groups comprises all 
f.g. nilpotent groups, 
where $\REG$ is the class of regular languages. 
Moreover, it includes  examples 
such as the wreath product 
$\mathbb{Z}_2 \wr \mathbb{Z}^2$ and Thompson's 
group $F$.

%
%
%
%
%
%
%
%
%
%
%
%
%
%
%
%
%
%
%

In Section \ref{characterization_section} we 
study the Cayley distance function 
for \linearC\ and $\REG$--Cayley 
polynomial--time computable representations. 
We demonstrate that some properties of 
the Cayley distance function 
which hold for Cayley automatic 
representations (shown in the previous works
\cite{BT_LATA18,eastwest19}) do not hold 
neither for \linearC\ and nor for 
$\REG$--\polyC\ representations.    
Section \ref{sec_conclusion} concludes the paper.
Figure \ref{Venndiag} shows a Venn diagram 
for different classes of groups considered in this paper.

\begin{figure}[h!]
	\centering
	\begin{tikzpicture}[scale=1.1,every node/.style={scale=.9}]

	\draw[rounded corners=10pt]
	(.6,0.2) rectangle ++(2,1);
	
	\draw[rounded corners=10pt]
	(0,0) rectangle ++(6.8,2.5);
	
	\draw[rounded corners=10pt]
	(0,0) rectangle ++(7.8,4);
	
	\draw[rounded corners=10pt]
	(0,0) rectangle ++(8.8,5.5);
	
	\draw[rounded corners=10pt]
	(0,0) rectangle ++(9.8,7);
	
	\draw[dashed, rounded corners=10pt]
	(0,0) rectangle ++(6.8,8.5);

	\node [at = {(1.6,.7)}] {automatic};
	\node [at = {(3.3,1.8)}] {\begin{tabular}{l}Cayley automatic$=$\\\REG--Cayley p.f. linear--time computable\end{tabular}};
	\node [at = {(3.05,3.2)}] {$\mathcal{C}$--Cayley p.f. linear--time computable};
	\node [at = {(2.66,4.7)}] {\begin{tabular}{l}$\mathcal{C}$--Cayley 
		polynomial--time with\\quasigeodesic normal form\end{tabular}};
	\node [at = {(3.2,6.2)}] {$\mathcal{C}$--Cayley 
		polynomial--time computable};
	\node [at = {(3.4,7.7)}] {\REG--Cayley 
		   polynomial--time computable};
	
	\end{tikzpicture}
\label{Venndiag}
\caption{A Venn diagram of classes of interest.}
\end{figure}
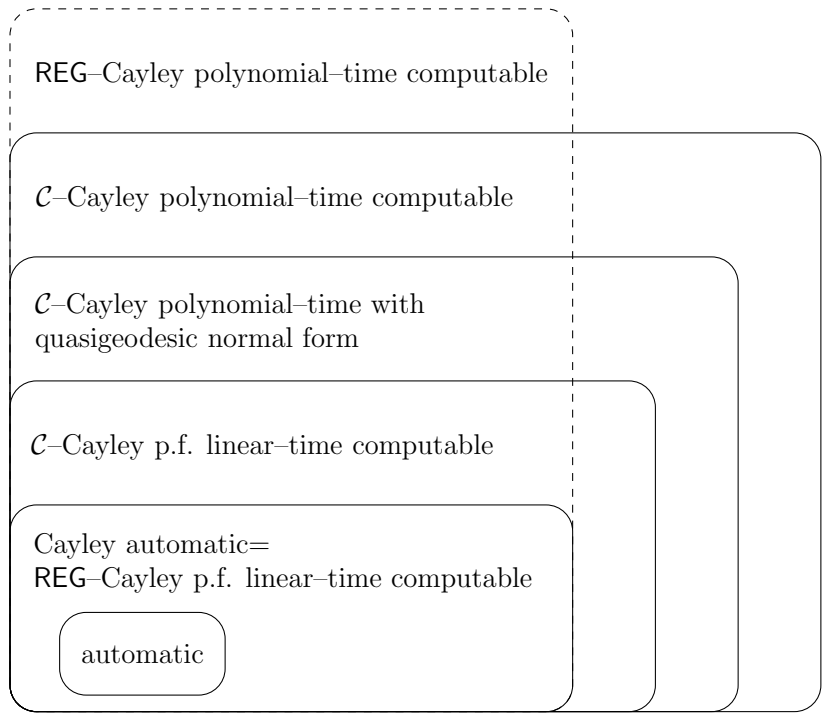

{\it Related work.}   
We briefly mention some previous works which extend the 
notion of an automatic group. 
The motivation to introduce such extensions 
was principally to include all 
fundamental groups of
compact $3$--manifolds.     
Bridson and Gilman introduced the notion 
of an asynchronously $\mathcal{A}$--combable 
group for an arbitrary class of languages 
$\mathcal{A}$ \cite{BridsonGilman1996}.     
Baumslag, Shapiro and Short introduced the 
class of parallel poly--pushdown groups \cite{baumslagshapiro}.      
Brittenham, Hermiller and Holt 
introduced the notion of an autostackable 
group \cite{BrittenhamHermillerHolt14}. 
Kharlampovich, Khoussainov and Miasnikov
introduced the notion of a Cayley 
automatic group \cite{KKM11} from which 
the present paper developed.   
The second author and Taback extended 
the notion of a Cayley automatic group 
replacing the class of regular languages with more 
powerful language classes \cite{ElderTabackCgraph}, which we refine here.

\section{Cayley Automatic Groups}
\label{Cayley_automatic_section}

Kharlampovich, Khoussainov and Miasnikov 
introduced the notion of a Cayley automatic 
group\footnote{A Cayley automatic group is also referred 
	to as a Cayley graph automatic or graph automatic group
	in the literature.}     
\cite{KKM11} as a natural generalization of
the notion of an automatic group \cite{Epsteinbook} 
which uses the same computational model --  
a two--tape synchronous automaton.   
The class of Cayley automatic groups not only comprises 
all automatic groups, but it includes a rich family 
of groups which are not automatic. In particular, 
it includes all f.g. nilpotent groups of nilpotency class two \cite{KKM11}, 
the Baumslag--Solitar groups \cite{dlt14},  
higher rank lamplighter groups \cite{Taback18} and 
all wreath products of the form $G \wr H$, where 
$G$ is Cayley automatic and $H$ is virtually 
infinite cyclic
\cite{wreath_BET21}. We assume that the reader is 
familiar with the notion of a regular language, 
a finite automaton and a multi--tape synchronous 
automaton. Below we briefly recall both 
definitions: for automatic and Cayley automatic groups.  

Let $G$ be a finitely generated   group 
with a finite generating set $A \subset G$.  
We denote by $A^{-1}$ the set of the inverses  
of elements of $A$. 
Let $S = A \cup A^{-1}$. 
For a given word 
$w  = s_1 \dots s_m \in S^*$
let $\pi (w)$ be  the product of elements 
$s_1 \dots s_m$
in the group $G$; if $w$ is the empty string $w = \epsilon$, then $\pi(w)$ is the identity of the group $G$. 
For a given language $L \subseteq S^*$, we denote 
by $\pi : L \rightarrow G$ the canonical map
which sends a string\footnote{We use the terms ``string" and ``word" interchangeably.}  
$w \in L$ to the group element $\pi(w) \in G$.  

It is said that the group $G$ is automatic, if there is a 
regular language $L \subseteq S^*$ such that 
the canonical map $\pi : L \rightarrow G$ 
is bijective and for every $s \in S$ the relation 
$L_s = \{(u,v) \in L \times L \, | \, 
\pi(u)s = \pi(v)\}$ is recognized by a two--tape
synchronous automaton. 
A string $w \in L$ is called a normal form 
for the group element $\pi (w) \in G$;  
accordingly, $L$ is called a language of normal 
forms. We call the bijection $\pi : L \rightarrow G$
an automatic representation of the group $G$. 

Let $\Sigma$ be an arbitrary finite
alphabet.
It is said that the group $G$ is Cayley automatic
if there is a regular language 
$L \subseteq \Sigma^*$ and a bijection 
$\psi: L \rightarrow G$ such that 
for every $s \in S$ the relation 
$R_s = 
\{ (u,v) \in L \times L \, | \, 
\psi(u) s = \psi (v)\}$
is recognized by a two--tape synchronous automaton.  
Similarly, we say that 
$L$ is a language of normal forms and 
a string $w \in L$ is a normal form for the group 
element $\psi (w) \in G$. We call the bijection 
$\psi : L \rightarrow G$ a Cayley automatic 
representation of the group $G$. 
We note that the notion of a Cayley automatic group does not 
require the bijection $\psi : L \rightarrow G$ 
to be canonical. As long as for every $s \in S$ 
the relation $R_s$ is recognized by
a two--tape synchronous automaton, $\psi$ can 
be an arbitrary bijection. Similarly, as long
as $L$ is regular, it can be a language over an 
arbitrary alphabet $\Sigma$.

It is said that a function 
$f : \Sigma^* \rightarrow \Sigma^*$ 
is automatic if the relation 
$R_f = \{(w, f(w)) \in \Sigma^* \times \Sigma^* 
\, | \,  w \in \Sigma^* \}$ is recognized 
by a two--tape synchronous automaton.
So one can equivalently define Cayley 
automatic groups in the following way.

\begin{definition}[Cayley automatic groups]
	\label{Cayley_automatic_def}	
	We say that the group $G$ is Cayley automatic 
	if there exists a regular language 
	$L \subseteq \Sigma^*$  
	over some finite alphabet $\Sigma$,
	a bijective mapping $\psi : L \rightarrow G$ 
	and automatic functions 
	$f_s : \Sigma^* \rightarrow \Sigma^*$, $s \in S$,
	such that: 
	\begin{itemize} 
		\item{$f_s (L) \subseteq L$, that is,  
			$f_s$ maps a normal form to a normal form;}
		\item{for every $w \in L$: 
			$\psi (f_s (w)) = \psi (w) s$, that is,  
			the following diagram commutes:
			\[ \begin{tikzcd}
			L \arrow{r}{f_s} \arrow[swap]{d}{\psi} & 
			L \arrow{d}{\psi} \\%
			G \arrow{r}{\times s}& G
			\end{tikzcd}
			\]}  
	\end{itemize} 
	for all $s \in S$. We call 
	$\psi: L \rightarrow G$ a Cayley automatic representation 
	of $G$. 
\end{definition}	

\begin{remark} 
	The original motivation 
	to study Cayley automatic groups 
	stemmed not only from the notion of 
	an automatic group \cite{Epsteinbook}, but also
	from the notion of a FA--presentable
	structure \cite{KhoussainovNerode95}. 
	Namely, a f.g. group is Cayley automatic if and 
	only if its labelled directed Cayley graph 
	is a FA--presentable structure \cite{KKM11}. 
	For a recent survey of the theory of
	FA--presentable structures we refer the
	reader to \cite{Stephan2015}.           
\end{remark}

\section{Cayley position--faithful linear--time computable 
groups}
\label{quasi-Cayley-automatic-section} 

The notion of a Cayley automatic group 
can be naturally extended further to that of a
Cayley position--faithful linear--time computable 
group which we introduce in this section.

Let us first recall the notion 
of a position--faithful 
one--tape Turing machine (as defined in \cite[p.~4]{Stephan_lmcs_13}). \begin{definition}[Position--faithful one--tape Turing machine]\label{defn:posFaith}
A position--faithful one--tape Turing machine is a Turing machine which uses a 
semi-infinite tape (infinite in one direction only)
with the left--most position containing the special symbol $\boxplus$ which only occurs at this position and cannot be modified. 
The initial configuration of the tape is $\boxplus x \boxdot^\infty$, where $\boxdot$ is a special blank symbol, and $x\in \Sigma^*$ for some alphabet $\Sigma$ with $\Sigma\cap\{\boxplus ,\boxdot\}=\varnothing$. During the computation the Turing machine operates as usual, reading and writing cells to the right of the $\boxplus$ symbol.

A function $f : \Sigma^* \rightarrow \Sigma^*$ is said to be
computed by a position--faithful one--tape Turing machine, if when started with tape content being 
$\boxplus x \boxdot^\infty$, the head initially being at 
$\boxplus$, the Turing machine eventually reaches an accepting state (and halts), with the tape having prefix 
$\boxplus f(x) \boxdot$ where $x,f(x)\in \Sigma^*$. There is no restriction on the output beyond the first appearance of $\boxdot$.  \end{definition}

Case, Jain, Seah and Stephan established the equivalence
of the following classes of functions \cite{Stephan_lmcs_13}:
\begin{itemize} 
	\item {automatic functions 
		$f: \Sigma^* \rightarrow \Sigma^*$;} 
	\item{functions $f: \Sigma^* \rightarrow \Sigma^*$ 
		computed in linear time by a 
		deterministic position--faithful one--tape Turing machine.}  
				\item{functions  $f: \Sigma^* \rightarrow \Sigma^*$  computed in linear time 
			by a nondeterministic position--faithful
			one--tape Turing machine.}   
\end{itemize} 
We say that a function 
$f: \Sigma^* \rightarrow \Sigma^*$ is 
position--faithful (p.f.) linear--time computable if it is computed 
by a (deterministic) position--faithful 
one--tape Turing machine in linear time. 
By the equivalence above 
$f : \Sigma^* \rightarrow \Sigma^*$
is  p.f. linear--time computable if and only if 
it is automatic. So we may use the terms  
automatic function and p.f. linear--time computable function interchangeably.

We note that the requirements 
of being one--tape and  
position--faithful matter.
Consider the following example 
from \cite[p.~4]{Stephan_lmcs_13}: a function which takes input $w\in \Sigma$ and outputs the binary string $v\in\{0,1\}^*$ where $w=uvxy$ with $u\in (\Sigma\setminus\{0,1\})^*$, $x\in\Sigma\setminus\{0,1\}$. An ordinary semi--infinite tape Turing machine can easily compute this: simply move the read head to the first occurrence of $0,1$ on the tape (or replace all cells $u,x,y$ by blank symbols, depending on the Turing machine model). The position--faithful model is not able to perform this function in linear time: it would have to somehow copy the contents of the cells containing $v$ forwards so that they start after the $\boxplus$ symbol, but this would involve at least $O(|u|^2)$ steps.
Note that this 
function can be computed by a 
deterministic position--faithful {\em two--tape} Turing 
machine in linear time.     
The functions
computed by position--faithful one--tape Turing machines form a natural subclass of the class 
of linear--time computable functions. 

In order to make our first generalisation of Cayley automatic, we  allow the language of normal forms to be in any formal language class, not necessarily regular. However we maintain the requirement that right multiplication
by a generator, or its inverse, is  computed by
an automatic function.     
%
Let $G$ be a f.g. group and
$S = \{s_1, \dots, s_n\} \subset G$ be  
a finite set of semigroup generators of $G$: that is, every 
$g \in G$ can be represented as a product of 
elements from $S$. 
Let $\mathcal{C}$ be a nonempty class of 
languages. 
\begin{definition}[$\mathcal{C}$--Cayley 
	 position--faithful
	 linear--time 
	 computable group] 
	\label{quasiautdef1}  
	We say that  $G$  
	is a $\mathcal{C}$--Cayley 
	position--faithful (p.f.)
	linear--time 
	computable group
	if 	there exist 
	a language $L \subseteq \Sigma^*$ from the class 
	$\mathcal{C}$ over some 
	finite alphabet $\Sigma$,  
	a bijective mapping 
	$\psi : L \rightarrow G$ between the language 
	$L$ and the group $G$ and p.f. linear--time 
	computable functions
	$f_i: \Sigma^* \rightarrow \Sigma^*$ 
	such that $f_i (L) \subseteq L$ and
	for every $w \in L$:
	$\psi(f_i (w)) = \psi(w)s_i$, 
	for all $i=1,\dots,n$. We call 
	$\psi : L \rightarrow G$ a \ClinearC\ representation of the group $G$.  
	If the requirement for $L$ to be in a specific 
	class $\mathcal{C}$ is omitted, 
	then we say that $G$ is a \linearC\ group and  $\psi : L \rightarrow G$
	is a \linearC\ representation of $G$. 
\end{definition} 
The class of
$\REG$--\linearC\ groups is simply the class of Cayley automatic groups. Below we show that, similarly to Cayley 
automatic groups, Definition \ref{quasiautdef1} 
does not depend on the choice of generators.  
\begin{proposition}
	\label{generator_independence}   
	The notion of a \ClinearC\ group does not depend on the choice of semigroup generators.
\end{proposition}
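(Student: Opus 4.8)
The plan is to show that if $G$ has a \linearC\ representation with respect to one finite semigroup generating set $S = \{s_1, \dots, s_n\}$, then it has one with respect to any other finite semigroup generating set $T = \{t_1, \dots, t_m\}$, keeping the same language $L$, the same bijection $\psi : L \to G$, and the same alphabet $\Sigma$. The key observation is that each new generator $t_j \in T$ can be written as a product of the old generators, say $t_j = s_{i_1} s_{i_2} \cdots s_{i_k}$ for some finite word over $S$ (since $S$ generates $G$ as a semigroup). I would then define the candidate function $g_j : \Sigma^* \to \Sigma^*$ realising right multiplication by $t_j$ as the composition $g_j = f_{i_k} \circ \cdots \circ f_{i_2} \circ f_{i_1}$.

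First I would verify the two defining conditions of Definition~\ref{quasiautdef1} for the $g_j$. The inclusion $g_j(L) \subseteq L$ follows immediately from the fact that each $f_i(L) \subseteq L$, so a finite composition of such maps preserves $L$. For the commuting condition, I would argue by induction on the length of the word $s_{i_1} \cdots s_{i_k}$: for $w \in L$ we have $\psi(f_{i_1}(w)) = \psi(w) s_{i_1}$, and applying the maps one at a time gives $\psi(g_j(w)) = \psi(w) s_{i_1} s_{i_2} \cdots s_{i_k} = \psi(w) t_j$, exactly as required. Since $L$ and $\psi$ are unchanged, $\psi : L \to G$ remains a bijection and $L$ remains in the class $\mathcal C$, so the language-class requirement is automatically preserved.

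The one genuine step requiring care — and the main obstacle — is verifying that each $g_j$ is again p.f.~linear--time computable, i.e.\ that the class of p.f.~linear--time computable functions is closed under composition. This is where I would lean on the equivalence established by Case, Jain, Seah and Stephan \cite{Stephan_lmcs_13}: a function $\Sigma^* \to \Sigma^*$ is p.f.~linear--time computable if and only if it is automatic. Since automatic functions are closed under composition (the composition of two synchronous two--tape automatic relations is again recognised by a synchronous two--tape automaton, via a standard product/projection construction), each finite composition $g_j = f_{i_k} \circ \cdots \circ f_{i_1}$ is automatic, hence p.f.~linear--time computable. Working through the Turing-machine side directly would be delicate, because naively running the machines for $f_{i_1}, \dots, f_{i_k}$ in sequence keeps linear time only because $k$ is a fixed constant and each $f_i$ is linear-time; invoking the automatic-function characterisation sidesteps any bookkeeping about how outputs of one machine feed into the next.

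Finally I would note that the argument is symmetric: since $T$ generates $G$ and each $t_j$ is also expressible over $S$, and conversely each $s_i$ is expressible over $T$ (as $T$ generates $G$), the same construction runs in both directions, so having a \linearC\ representation relative to $S$ is equivalent to having one relative to $T$. Hence the notion is independent of the chosen finite semigroup generating set.
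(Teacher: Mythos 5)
Your proposal is correct and follows essentially the same route as the paper's own proof: both realise right multiplication by a new generator $t_j = s_{i_1}\cdots s_{i_k}$ as the composition $f_{i_k}\circ\cdots\circ f_{i_1}$ of the given functions, keeping $L$, $\Sigma$ and $\psi$ unchanged, so that $L$-preservation and the commuting condition are immediate. The paper simply asserts that the composition is automatic, whereas you justify the only delicate point --- closure of p.f.~linear--time computable functions under composition --- by passing through the automatic-function characterisation of \cite{Stephan_lmcs_13}, which is exactly the right way to fill in that detail.
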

\begin{proof} 
	Let $G$ be a \ClinearC\ 
	group for a set of semigroup generators  
	$S= \{ s_1, \dots, s_n\}$. Then there is 
	a language $L \subseteq \Sigma ^*$ from the 
	class $\mathcal{C}$, a bijective mapping 
	$\psi: L \rightarrow G$ and 
	automatic functions 
	$f_i: \Sigma^* \rightarrow \Sigma^*$ 
	such that $f_i (L) \subseteq L$ and 
	$\psi (f_i (w)) = \psi(w) s_i$ for 
	all $i=1,\dots,n$ and $w \in L$. 
	Let $S' = \{s_1', \dots , s_k'\} 
	\subseteq G$ be another set of 
	semigroup generators of the group $G$.
	Each element $s' \in S'$ is a product 
	of elements from $S$. 
	Therefore, for a given $j = 1, \dots, k$ 
	there exist $s_{j_1}, \dots, s_{j_m} \in S$
	for which $s_j ' = s_{j_1}  \dots s_{j_m}$. 
	We define $f_j ' : \Sigma^* \rightarrow \Sigma^*$ 
	to be the composition: 
	$f_j ' = f_{j_m} \circ \dots \circ f_{j_1}$.
	For every $j = 1,\dots,k$ the function $f_j'$ is  automatic, 
	$f_j ' (L) \subseteq  L$ and 
	$\psi (f_j '(w)) = \psi (w) s_j '$ for all $w \in L$. 
	This shows that the definition of the class 
	of \ClinearC\ groups 
	does not depend on the choice of semigroup 
	generators $S$. 
\end{proof}
\begin{remark} 
\label{remark_symmetric_set}	
	By Proposition \ref{generator_independence}, one 
	can always assume that a set of semigroup 
	generators $S$ is symmetric, where 
	the term symmetric means that if $s \in S$, then 
	$s^{-1} \in S$. That is, 
	$S = A \cup A^{-1}$ for some finite 
	set of generators $A$ of the group $G$, where
	$A^{-1} = \{a^{-1} \, | \, a \in A\}$.
	Note that $A$ may or may not include the identity of $G$.        	 
\end{remark}	
\begin{remark}
	Similarly to Cayley automatic groups, 
	\linearC\ groups are related 
	to the notion of a FA--presentable structure.  
	Let $\mathfrak{B}$ be the structure 
	$\mathfrak{B}=\left(\Sigma^*; 
	\mathrm{Graph}(f_1),\dots,\mathrm{Graph}(f_n) \right)$ 
	for some $\Sigma$ and $f_1,\dots,f_n$ from 
	Definition \ref{quasiautdef1}, 
	where $\mathrm{Graph}(f)$ for a function 
	$f: \Sigma^* \rightarrow \Sigma^*$ is the 
	binary relation
	$\mathrm{Graph}(f) = \{(w,f(w)) \in 
	\Sigma^* \times \Sigma^* \, | \, w \in \Sigma^*\}$. Then every 
	structure $\mathfrak{B}' = 
	\left(B'; f_1',\dots,f_n'\right)$ isomorphic 
	to the structure $\mathfrak{B}$ is FA--presentable. 
	Let $\Gamma$ be the directed labelled graph
	$\Gamma = \left(G; E_1,\dots, E_n\right)$, 
	where $E_i = \{(g_1,g_2) \in G \times G \,|\, 
	g_1 s_i = g_2\}$. 
	Then the bijection $\psi^{-1} : G \rightarrow L$ is 
	an embedding of the structure $\Gamma$ into 
	the structure $\mathfrak{B}$.       
\end{remark}

\subsection{Quasigeodesic Normal Form}

We notice that the analogue of the bounded  
difference lemma 
(see \cite[Lemma~2.3.9]{Epsteinbook} for automatic 
and \cite[Lemma~8.1]{KKM11} for Cayley automatic 
groups) holds for \linearC\ 
groups as well.
Let $G$ be a \linearC\ group 
and $\psi : L \rightarrow G$ 
be a \linearC\ representation of $G$
for some language $L \subseteq \Sigma^*$. 
Let $S = A \cup A^{-1}$ for  
some finite set of generators $A$ of the group 
$G$, see Remark \ref{remark_symmetric_set}.  
\begin{lemma} 
	\label{bounded_difference_lemma}
	There exists a constant $K>0$ such that  
	for every $g \in G$ and
	$s \in S$,
	if $u,v \in L$ are the strings representing 
	$g$ and $gs$, respectively: $\psi(u)=g $ 
	and $\psi(v)=gs$, then  
	$||u|-|v|| \leqslant K$.
\end{lemma}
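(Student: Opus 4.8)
The plan is to exploit the characterisation of each $f_s$ as an automatic function, i.e.\ that $\mathrm{Graph}(f_s)=\{(w,f_s(w))\mid w\in\Sigma^*\}$ is recognised by a two--tape synchronous automaton $M_s$, and to run a standard pumping argument on the padded tail of a convolution. First I would isolate the auxiliary claim: for every automatic function $f:\Sigma^*\to\Sigma^*$ there is a constant $c$, namely the number of states of a recognising automaton, such that $|f(w)|-|w|\le c$ for all $w\in\Sigma^*$. Assuming the generating set is symmetric, $S=A\cup A^{-1}$ (justified by the Remark following Proposition \ref{generator_independence}), I would then set $K=\max_{s\in S} c_s$, where $c_s$ is the constant attached to $f_s$, and deduce the lemma from the claim.

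For the claim, fix $w$ and suppose $|f_s(w)|-|w|=d>c_s$. In the convolution $w\otimes f_s(w)$ the first track is padded beyond position $|w|$, so while reading the last $d$ symbol--pairs $M_s$ sees a constant first coordinate and behaves like a one--tape finite automaton scanning the tail of $f_s(w)$. Since $d$ exceeds the number of states, the pigeonhole principle produces two positions in this tail at which $M_s$ occupies the same state, yielding a loop lying entirely inside the padded region. Pumping this loop once inserts symbols into the tail of $f_s(w)$ while leaving the first track all--padding, so the resulting string is again a legitimate convolution and is accepted; hence $(w,v')\in\mathrm{Graph}(f_s)$ for some $v'\neq f_s(w)$. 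This contradicts $f_s$ being single--valued, proving $|f_s(w)|-|w|\le c_s$.

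To finish, take $g\in G$ and $s\in S$, and let $u,v\in L$ satisfy $\psi(u)=g$ and $\psi(v)=gs$. Since $f_s(L)\subseteq L$ and $\psi(f_s(u))=\psi(u)s=gs=\psi(v)$, injectivity of $\psi$ forces $v=f_s(u)$, so the claim gives $|v|-|u|\le c_s\le K$. Symmetrically, as $S$ is symmetric we have $s^{-1}\in S$, and $\psi(f_{s^{-1}}(v))=\psi(v)s^{-1}=g=\psi(u)$ forces $u=f_{s^{-1}}(v)$, whence $|u|-|v|\le c_{s^{-1}}\le K$. Combining the two inequalities yields $\bigl||u|-|v|\bigr|\le K$, as required.

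The main obstacle, and the reason the two directions are not symmetric, is that an automatic function only satisfies the one--sided length bound $|f(w)|-|w|\le c$: a shrinking automatic function such as a constant map witnesses that $|w|-|f(w)|$ can be unbounded, so the pumping argument bounds $|v|-|u|$ directly but says nothing a priori about $|u|-|v|$. Recovering the latter is exactly where the group structure enters, through the inverse generator $s^{-1}$ and the identity $u=f_{s^{-1}}(v)$. The one technical point to verify carefully is that the pumped loop stays inside the padded region, so that the modified word is indeed a convolution accepted by $M_s$; everything else is routine.
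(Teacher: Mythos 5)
Your proof is correct and follows essentially the same route as the paper: a pumping argument on the two--tape synchronous automaton recognizing $\mathrm{Graph}(f_s)$ gives the one--sided bound $|f_s(w)|\leqslant |w|+N_s$, and the two--sided bound then follows by applying this to both $s$ and $s^{-1}$ with $K=\max_s N_s$. The only difference is one of exposition: you spell out the pumping argument and the use of $u=f_{s^{-1}}(v)$ explicitly, whereas the paper leaves the symmetric direction implicit in taking the maximum over all $s\in A\cup A^{-1}$.
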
 	
\begin{proof} 
	For every 
	$s \in S$
	there is 
	an automatic function  
	$f_s : \Sigma^* \rightarrow \Sigma^*$ 
	such that $f_s (u) = v$ for all  
	$u,v \in L$  for which 
	$\psi (u)s = \psi (v)$ in the group $G$. 
	For a given 
	$s \in S$, 
	let $M_s$ be a (nondeterministic) two--tape 
	synchronous automaton recognizing the 
	relation 
	$R_{f_s} = \{(w, f_s(w)) \in \Sigma^* \times \Sigma^* 
	\, | \,  w \in \Sigma^*\}$. 
	By the pumping lemma, for every 
	$(u,v) \in R_{f_s}$ the following 
	inequality holds:  
	$$
	|v| \leqslant |u| + N_s,   
	$$
	where $N_s$ is the number of states 
	of the automaton $M_s$. 
	Therefore, 
	for all $u,v \in L$, if 
	$\psi(u)=g$ and $\psi(v)=gs$ 
	for some $g \in G$ and 
	$s \in S$, 
	then $||u|-|v|| \leqslant K$, 
	where 
    $K = \max \{ N_s \, | \, s \in S\}$
\end{proof}	
For a given group element $g \in G$ we denote by 
$d_A (g)$ the length of a geodesic word representing 
$g$ with respect to the set of generators $A$.

\begin{definition}[\cite{ElderTabackCgraph}] 
\label{quasigeodesicnf_def} 	
	Let $\psi : L \rightarrow G$ be a 
	 bijection between 
	some language $L \subset \Sigma^*$ and 
	$G$. 
	It is said that a representation 
	$\psi : L \rightarrow G$ has quasigeodesic
	normal form if there is a constant $C$ 
	such that for all $w \in L$: 
	$|w| \leqslant C \left(d_A (\psi(w))+1 \right)$. 
\end{definition}

\begin{theorem}[Quasigeodesic normal form]  
	\label{quasigeodesicnormalformthm}   
	A \linearC\ representation $\psi : L \rightarrow G$
	has quasigeodesic normal form.  
\end{theorem}
\begin{proof} 
	For a given $w \in L$, let $s_1  \dots  s_n$,
	for $s_i \in A \cup A^{-1}, i=1,\dots,n$,
	be a geodesic in $G$ with 
	respect to the set of generators $A$ such that 
	$s_1 \dots s_n = \psi(w)$ in $G$; so, $d_A(\psi (w)) =n$.  
	We denote by $w_0$ the string representing  
	the identity $e$: $\psi(w_0) = e$. For a
	given $i \in \{1,\dots,n\}$, let  
	$w_i =\psi^{-1}( s_1 \dots s_i)$.     
	By Lemma \ref{bounded_difference_lemma}, 
	$|w_{i+1}| \leqslant |w_i| + K$ for all 
	$i=0,\dots,n-1$ and some constant $K$. 
	Therefore, $|w| \leqslant n K + |w_0|$.
	Let $C = \max \{K, |w_0|\}$. Thus, 
	$|w| \leqslant C (d_A (\psi(w)) + 1)$. 
\end{proof}
\subsection{Algorithmic Properties}

A key property of automatic and Cayley automatic groups 
 is 
the existence of a quadratic time algorithm 
which for a given word $v \in
\left( A \cup A^{-1} \right)^*$ 
finds the normal form $u \in L$, i.e., the  
string for which
$\psi(u) = \pi(v)$;
see \cite[Theorem~2.3.10]{Epsteinbook} 
and \cite[Theorem~8.2]{KKM11} for 
automatic and Cayley automatic groups, 
respectively. Below we show that this 
property holds  for \linearC\ 
groups as well.    
\begin{theorem}[Computing normal form in quadratic time]
	\label{quadratic_alg_theorem1} 	
	There is an algorithm which for a given input word 
	$v \in (A \cup A^{-1})^*$ 
	computes the string $u \in  L$, 
	for which $\psi(u) = \pi (v)$ in the group $G$.
	Moreover, this algorithm can be implemented  
	by a deterministic position--faithful one--tape 
	Turing machine in quadratic time.  
\end{theorem}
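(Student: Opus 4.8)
The plan is to build the normal form iteratively, one generator at a time, using the commuting squares that define the functions $f_i$. By the Remark following Proposition~\ref{generator_independence} we may assume $S=A\cup A^{-1}$, so every letter of the input is some $s_i$ with an associated p.f.\ linear--time computable function $f_i$. Write $v=t_1t_2\cdots t_m$ with each $t_j\in S$, and let $w_0\in L$ be the (fixed) normal form of the identity, $\psi(w_0)=e$. Set $w_j=f_{t_j}(w_{j-1})$ for $j=1,\dots,m$. Since each $f_i$ maps $L$ into $L$ and satisfies $\psi(f_i(w))=\psi(w)s_i$, an immediate induction gives $w_j\in L$ and $\psi(w_j)=t_1\cdots t_j$; in particular $u:=w_m\in L$ satisfies $\psi(u)=\pi(v)$, which is the required output. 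Thus the algorithm is: initialise the tape with $w_0$, then read the letters of $v$ from left to right, applying the corresponding $f_i$ to the running string.

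For the complexity I would first bound the sizes of the intermediate strings. By Lemma~\ref{bounded_difference_lemma} each application of an $f_i$ changes the length by at most a constant $K$, so $|w_j|\leqslant |w_0|+jK$ and hence $|w_j|=O(m)$ throughout. Each $f_i$ is p.f.\ linear--time computable, so evaluating it on a string of length $O(m)$ costs $O(m)$ steps and, being linear time, visits only $O(m)$ cells. Summing the $m$ applications gives total running time $O(m^2)$, i.e.\ quadratic in $|v|$, as claimed.

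The main obstacle is carrying this out on a single position--faithful tape, because each $f_i$ is available only as a black--box position--faithful one--tape machine that expects its input immediately after $\boxplus$ followed by blanks, may leave arbitrary garbage beyond the first blank, and must be applied repeatedly without corrupting the unprocessed part of $v$. I would resolve this by keeping the tape in the form $\boxplus\, u\, \boxdot\cdots\boxdot\, \#\, \hat v\, \boxdot^\infty$, where $u$ is the current running string (in proper position--faithful position), $\#$ is a fresh separator used by none of the $f_i$, and $\hat v$ is the still--unprocessed suffix of $v$. The key point is to place $\#$ initially at position $Dm+E$ for suitable constants $D,E$ computed from $K$, $|w_0|$ and the linear--time constant of the machines, chosen so that the $O(m)$ cells ever visited by a single call to some $f_i$ can never reach $\#$; consequently each such call behaves exactly as though the tape held $\boxplus\, u\, \boxdot^\infty$ and leaves the block $\#\,\hat v$ untouched. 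A single step then consists of travelling right to $\#$, reading and deleting the first letter $t$ of $\hat v$ (rewriting $\#\,t$ as $\boxdot\,\#$, so $\#$ moves one cell right), returning to $\boxplus$, running the machine for $f_t$, and finally sweeping from the first $\boxdot$ after the new $u$ up to $\#$ to erase any garbage the call left behind; this sweep is unambiguous because $u$ contains no $\boxdot$, so its right end is marked by the first blank, and $\#$ is never overwritten. Each step costs $O(m)$, the initial placement of $\#$ and relocation of $v$ can be done by a standard shifting procedure in $O(m^2)$, and the final configuration $\boxplus\, u\, \boxdot\cdots$ is already a valid position--faithful output, yielding the quadratic bound overall.
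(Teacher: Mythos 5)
Your proposal is correct and follows essentially the same route as the paper's proof: iterate the p.f.\ linear--time machines $f_{t_j}$ starting from the normal form $w_0$ of the identity, bound the intermediate lengths via Lemma~\ref{bounded_difference_lemma}, and sum the linear costs of the $m$ calls to get the quadratic bound. In fact you supply more single--tape implementation detail than the paper (which simply asserts the position--faithful implementation is clear, counting only the shuttling moves to read off each $b_j$); your separator placed at distance $Dm+E$, together with the observation that a linear--time call visits only $O(m)$ cells and so cannot disturb the unread suffix of $v$, is a valid and welcome way to make that assertion rigorous.
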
		
\begin{proof}    
	Let us be given   
	the string $u_0 \in L$ 
	representing the identity
	$e \in G$: $\psi(u_0)=e$. 
	Let $v = s_1 \dots s_k$, where $s_i \in A \cup A^{-1}$.
	For a given $i=1,\dots,k$ we denote by 
	$TM_{s_i}$ a position--faithful  
	deterministic one--tape Turing machine which computes 
	the function $f_{s_i}$ in linear time. 
    To simplify the exposition let us assume 
    first that there are two  tapes. Initially, the configuration of the first and the second tapes are   
    $\boxplus v \boxdot^\infty$ and 
    $\boxplus \boxdot^\infty$, respectively, with 
    the heads over the special symbol $\boxplus$.   
	A general outline of an algorithm computing the representative 
	string $u \in L$ for the input string $v$ is as follows.
    
    First the algorithm writes the string $u_0$ on the
    second tape and moves the head back to the initial position. Then on the first tape it makes one 
    move to the right, reads off the 
    symbol $s_1$, marks the cell 
    and moves the head back to 
    the initial position. 
    Then on the second tape it 
    computes the representative $u_1 \in L$ 
    of $s_1$ by feeding $u_0$ to $TM_{s_1}$ as the input 
    and moves the head back to the initial position. 
    Then on the first tape the head moves to the right until it encounters the first non--marked symbol $s_2$,  reads it off,  marks the cell and moves 
    the head back to the initial position. 
    Then on the second tape it computes 
    the representative $u_2 \in L$  of $s_1s_2$ by feeding $u_1$ to $TM_{s_2}$ as the input and moves the head back to the initial position. 	
  	Continuing in this way it finally computes the  
  	representative $u_k \in L$ of the group 
  	element $s_1 \dots s_k$. 
	    
	By Lemma \ref{bounded_difference_lemma}, 
	for every $u_j \in L$ representing 
	the element $s_1 \dots s_j$ we have 
	$|u_j| \leqslant  |u_{0}| + K j$, $j =1, \dots,k$.  
	Moreover,  there are constants $C_1,C_0$ such that 
	for every $j =1, \dots,k$ 
	the Turing machine $TM_{s_j}$ computes 
	$u_{j}$ from the input $u_{j-1}$ in time 
	at most $C_1 |u_{j-1}|  + C_0$. 
	Therefore, $TM_{s_j}$ computes 
	$u_{j}$ from the input $u_{j-1}$ in time 
	at most $C_1 K (j-1) + C_1 |u_0|+C_0$ for each 
	$j=1,\dots,k$.       
	Furthermore, $2j$ moves are required to read off 
    a symbol $s_j$ and return the head back to the 
    initial position for each $j=1,\dots,k$.  
	Thus, the total number of 
	moves is at most quadratic.  
	The last thing to note is that the algorithm 
	can be implemented on one tape  
	using $2$--tuple symbols: 
	if a symbol $\beta$ on the first tape appears 
	on top of a symbol $\gamma$ on the second tape, 
	this pair can be encoded by a
	$2$--tuple symbol  
	$\beta \choose \gamma$.	
\end{proof}
\begin{corollary}[Solving the word problem in quadratic time] 
	\label{quasiCayleywordproblem}	
	For a \linearC\ group
	the word problem can be solved  
	by a deterministic  one--tape Turing machine
	in quadratic time.  	 
\end{corollary}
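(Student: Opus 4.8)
The plan is to reduce the word problem directly to the normal-form computation of Theorem \ref{quadratic_alg_theorem1}. Recall that the word problem asks, given an input word $v \in (A \cup A^{-1})^*$, whether $\pi(v) = e$ holds in $G$. Since $\psi : L \rightarrow G$ is a bijection, every group element---in particular the identity---has a unique normal form; let $u_0 \in L$ be the fixed string with $\psi(u_0) = e$. Then $\pi(v) = e$ if and only if the normal form $u \in L$ of $\pi(v)$, i.e.\ the unique string satisfying $\psi(u) = \pi(v)$, coincides with $u_0$. This reformulation is the whole conceptual content of the argument.

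First I would invoke Theorem \ref{quadratic_alg_theorem1} to compute $u$ from $v$ on a deterministic position--faithful one--tape Turing machine in quadratic time. Note that $v$ is itself a word over $A \cup A^{-1}$ representing $\pi(v)$, so $d_A(\pi(v)) \leqslant |v|$; combining this with Lemma \ref{bounded_difference_lemma} (or equivalently with the quasigeodesic bound of Theorem \ref{quasigeodesicnormalformthm}) gives $|u| \leqslant |u_0| + K|v|$, which is linear in $|v|$. Next I would append a comparison phase: since $u_0$ is a single fixed string, it can be hard--coded into the finite control, and the machine scans the computed tape contents once, verifying symbol by symbol that the output equals $u_0$ and accepting precisely when it does. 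This comparison runs in time linear in $|u|$, hence linear in $|v|$, which is absorbed into the quadratic bound of the first phase, giving an overall running time of $O(|v|^2)$.

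I do not expect any substantial obstacle here, as the statement is a direct corollary of Theorem \ref{quadratic_alg_theorem1}; the only points requiring (routine) care are that the comparison phase respects the position--faithful discipline of the one--tape model and that its cost does not exceed the quadratic time already incurred, both of which follow immediately from the linear bound on $|u|$. A minor presentational subtlety is merely cosmetic: the corollary as stated refers to an ordinary deterministic one--tape Turing machine rather than the position--faithful variant, but since the latter is a special case of the former, the quadratic bound transfers without change.
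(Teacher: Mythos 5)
Your proposal is correct and takes essentially the same route as the paper's own proof: compute the normal form $u$ of $\pi(v)$ via Theorem~\ref{quadratic_alg_theorem1} and then compare it with the fixed string $u_0$ representing the identity, accepting exactly when $u = u_0$. The additional bookkeeping you include (the linear bound $|u| \leqslant |u_0| + K|v|$ from Lemma~\ref{bounded_difference_lemma}, and the observation that a position--faithful machine is a special case of an ordinary deterministic one--tape machine) merely makes explicit what the paper leaves implicit.
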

\begin{proof}
	An algorithm solving the word problem in $G$ 
	is as follows.   
	For a given input word $v \in (A \cup A^{-1})^*$  
	it first finds the string $u \in L$ 
	representing $\pi(v)$: $\psi(u) = \pi (v)$, as 
	it is described in Theorem \ref{quadratic_alg_theorem1},  and then compares $u$ with the string $u_0$ representing 
	the identity $e \in G$: if $u = u_0$, then 
	$\pi(v) = e$; otherwise, $\pi(v) \neq e$.  
	This algorithm can be implemented by a deterministic 
	one--tape Turing machine in quadratic time. 
\end{proof}

\begin{theorem} 
	\label{L_recursivelyenumerable_quasiautomatic}	
	Let $\RE$ denote the class of recursively 
	enumerable languages. 
	For every \linearC\ representation 
	$\psi: L \rightarrow  G$ the language 
	$L$ is in the class $\RE$. 
\end{theorem}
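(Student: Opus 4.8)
The plan is to exhibit an enumerator---a Turing machine that successively prints every string of $L$ and nothing else---which proves $L \in \RE$. The whole argument rests on a single structural observation: because $S$ generates $G$ as a semigroup, every normal form in $L$ is reachable from the normal form of the identity by applying a finite sequence of the functions $f_i$.

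Concretely, first I would fix the string $u_0 \in L$ with $\psi(u_0) = e$; since the representation $\psi : L \rightarrow G$ is given and fixed, $u_0$ is a particular finite string that may be hard--coded into the machine. Next, recall that each $f_i$ is p.f.\ linear--time computable, hence a \emph{total} computable function $\Sigma^* \rightarrow \Sigma^*$, so any finite composition $f_{i_m} \circ \cdots \circ f_{i_1}$ is again computable.

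The key step is to verify the identity
\[
L = \{\, (f_{i_m} \circ \cdots \circ f_{i_1})(u_0) \;:\; m \geqslant 0,\ i_1,\dots,i_m \in \{1,\dots,n\} \,\}.
\]
For the inclusion $\supseteq$, the condition $f_i(L) \subseteq L$ together with $u_0 \in L$ gives, by induction on $m$, that each such composite image lies in $L$. For $\subseteq$, take $w \in L$ and write $g = \psi(w)$ as a semigroup product $g = s_{i_1} \cdots s_{i_m}$ of generators (possible since $S$ generates $G$). Applying the defining relation $\psi(f_i(u)) = \psi(u) s_i$ repeatedly, starting from $u_0$, shows that $(f_{i_m} \circ \cdots \circ f_{i_1})(u_0)$ is a string in $L$ mapping under $\psi$ to $g$; since $\psi$ is a bijection, this string equals $w$.

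Given this identity, the enumerator is immediate: systematically enumerate all finite tuples $(i_1,\dots,i_m)$ over $\{1,\dots,n\}$ (there are countably many, and they can be listed effectively, for instance by length and then lexicographically), for each tuple compute $(f_{i_m} \circ \cdots \circ f_{i_1})(u_0)$ using the Turing machines for the $f_i$, and print the result. By the identity above the set of printed strings is exactly $L$, so $L$ is recursively enumerable. I expect no serious obstacle here; the only point requiring care is the $\subseteq$ inclusion, where one must use both the semigroup generation of $G$ and the injectivity of $\psi$ to conclude that the computed string is the given $w$ rather than merely \emph{some} string representing the same group element.
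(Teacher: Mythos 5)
Your proof is correct and is essentially the paper's own argument: the paper enumerates all words $v\in (A\cup A^{-1})^*$ and prints the normal form obtained by iteratively applying the machines for the $f_i$ to $u_0$ (via Theorem~\ref{quadratic_alg_theorem1}), which is exactly your enumeration of index tuples and composites $(f_{i_m}\circ\cdots\circ f_{i_1})(u_0)$. The only difference is that you spell out the correctness verification (the two inclusions, using $f_i(L)\subseteq L$, semigroup generation, and injectivity of $\psi$) that the paper leaves implicit, which is a welcome but not substantively different addition.
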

\begin{proof}	
	A procedure listing all words of the language 
	$L$ is as follows. It consecutively takes 
	$v \in (A \cup A^{-1})^*$  as the input to produce 
	the output $\psi^{-1}(v) \in L$ using the algorithm 
	described in Theorem \ref{quadratic_alg_theorem1}.
	This procedure lists all strings of the 
	language $L$. 
\end{proof}

\begin{proposition}
	\label{Mikhailova_example}	
	Let $\R$ denote the class recursive languages. The class of $\left(\RE \setminus\R\right)$--\linearC\ groups is non--empty.
\end{proposition}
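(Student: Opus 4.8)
The plan is to realise a finitely generated subgroup of a direct product of free groups whose membership problem is undecidable, and to inherit a Cayley p.f.\ linear--time computable representation for it from the (regular) automatic structure of the ambient product. Concretely, I would take $G = F_n \times F_n$, the direct product of two copies of the free group of rank $n$. Since free groups are automatic and a direct product of automatic groups is automatic, $G$ is automatic, hence Cayley automatic, hence $\REG$--Cayley p.f.\ linear--time computable; fix such a representation $\psi : L \to G$ with $L \subseteq \Sigma^*$ regular and with p.f.\ linear--time computable functions $f_s$ for the generators $s$ of $G$.

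Next I would recall Mikhailova's construction of a finitely generated subgroup with undecidable membership problem. Choose a finitely presented group $H = \langle a_1,\dots,a_n \mid r_1,\dots,r_m\rangle$ with unsolvable word problem (such $H$ exist by Novikov--Boone), and let $\phi : F_n \to H$ be the canonical quotient map. Let $M = (\phi\times\phi)^{-1}(\Delta_H) \leqslant G$ be the preimage of the diagonal $\Delta_H = \{(h,h) : h \in H\}$. This $M$ is finitely generated, for instance by $\{(a_i,a_i) : 1 \leqslant i \leqslant n\} \cup \{(r_j,1) : 1 \leqslant j \leqslant m\}$ together with inverses. The key property is that for $(u,v) \in G$ one has $(u,v) \in M$ if and only if $\phi(uv^{-1}) = 1$ in $H$, so deciding membership in $M$ is equivalent to solving the word problem in $H$ and is therefore undecidable.

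I would then produce the representation of $M$ by restriction. Put $L_M = \psi^{-1}(M) \subseteq L$ and $\psi_M = \psi|_{L_M} : L_M \to M$, a bijection. For each semigroup generator $t$ of $M$, written as a word in the generators $S$ of $G$, the multiplication--by--$t$ function is a finite composition of the $f_s$, hence again automatic (p.f.\ linear--time computable) by the argument of Proposition \ref{generator_independence}; call it $g_t : \Sigma^* \to \Sigma^*$. Because $M$ is a subgroup, $w \in L_M$ implies $\psi(g_t(w)) = \psi(w)t \in M$, so $g_t(L_M) \subseteq L_M$ and $\psi_M(g_t(w)) = \psi_M(w)t$. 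Thus $\psi_M$ is a Cayley p.f.\ linear--time computable representation of $M$; this is precisely the content of the closure result Theorem \ref{subgroupsofquasiCayleyarequasiCayley} applied to $M \leqslant G$, but carrying out the construction explicitly is important because it identifies the normal form language as $L_M = \psi^{-1}(M)$.

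Finally I would pin down the language class of $L_M$. By Theorem \ref{L_recursivelyenumerable_quasiautomatic}, $L_M \in \RE$. To see $L_M \notin \R$, I would argue by contradiction: if $L_M$ were recursive, then given any word $v$ over the generators of $G$ I could compute its normal form $u = \psi^{-1}(\pi(v)) \in L$ in quadratic time by Theorem \ref{quadratic_alg_theorem1}, and then test whether $u \in L_M$; since $u \in L_M$ holds exactly when $\pi(v) \in M$, this would decide membership in $M$, contradicting the undecidability established above. Hence $L_M \in \RE \setminus \R$, and $M$ together with $\psi_M$ witnesses that the class of $\left(\RE\setminus\R\right)$--\linearC\ groups is non--empty. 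The hard part will be verifying this last reduction cleanly --- namely that recognising $L_M$ really is equivalent to the membership problem for $M$ --- which is why I route the argument through the quadratic--time normal--form algorithm (Theorem \ref{quadratic_alg_theorem1}) rather than attempting to recover group elements from their normal forms directly.
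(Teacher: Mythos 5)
Your proposal is correct and follows essentially the same route as the paper's own proof: both take a Mikhailova-type subgroup of a product of free groups with undecidable membership problem, restrict a Cayley automatic representation of the ambient group to $L' = \psi^{-1}(H)$ (invoking Theorem \ref{subgroupsofquasiCayleyarequasiCayley}), and derive $L' \in \RE \setminus \R$ from Theorems \ref{L_recursivelyenumerable_quasiautomatic} and \ref{quadratic_alg_theorem1} via the same contradiction. The only difference is that you spell out Mikhailova's construction explicitly where the paper simply cites it, which is a matter of exposition rather than a different argument.
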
	
\begin{proof}  
	There exists a f.g. subgroup 
	$H \leqslant F_2 \times F_2$ 
	with undecidable membership 
	problem \cite{Mik66}: 
	given a word $w$ over some generating set 
	of $F_2  \times F_2$, decide whether 
	$\pi(w)$ is an element of $H$. 
	Let $\psi : 
	L \rightarrow F_2 \times F_2$ 
	be a \linearC\ representation 
	of $F_2 \times F_2$ (e.g., it can be Cayley 
	automatic or even automatic). 
	Let $L' = \psi^{-1}(H)$ and  
	$\psi': L' \rightarrow H$ be the restriction of $\psi$ onto $L'$: $\psi' = \psi|_{L'}$. 
	By Theorem \ref{subgroupsofquasiCayleyarequasiCayley} below, 
	$\psi' : L' \rightarrow H$ is a \linearC\ 
	representation of $H$. If $L'$ is recursive, then 
	the algorithm solving membership problem for $H$ is as follows. 
	For a given word $w$ over some generating set of 
	$F_2 \times F_2$ we first find the string 
	$u \in L$ for which 
	$\psi (u) = \pi(w)$  in $F_2 \times F_2$ (see   
	the algorithm in Theorem \ref{quadratic_alg_theorem1}) 
	and then verify whether $u$ is in the language $L'$ or not. Therefore, assuming that $L'$ is recursive, we get 
	that the membership problem for the subgroup 
	$H \leqslant F_2 \times F_2$ must be decidable, which 
	leads to a contradiction. 
	Therefore, $L'$ is not a recursive language, although it is recursively enumerable by Theorem \ref{L_recursivelyenumerable_quasiautomatic}.         	
\end{proof}

\subsection{Closure Properties}

Now we turn to closure properties for 
\linearC\ groups.     
Let $C$ be a given class of languages.  
Throughout the paper we assume that   
$\mathcal{C}$ is closed under a change of 
symbols in the alphabet. 
That is, if $\xi: \Sigma \rightarrow \Sigma'$ is 
a bijection  between two finite alphabets $\Sigma$ and 
$\Sigma'$ and $L$ is in the class $\mathcal{C}$, then 
the image of $L$ under the homomorphism 
induced by $\xi$ is also in the class $\mathcal{C}$.    
\begin{theorem}[Finite extensions] 
	\label{finiteextensionthm}
	Assume that a class of languages 
	$\mathcal{C}$ satisfies the 
	following closure property:  
	if $L \subseteq \Sigma^*$ is in the class $\mathcal{C}$ 
	and $L_0$ is a finite language 
	over some $\Sigma_0$ for which $\Sigma \cap \Sigma_0 = \varnothing$, then the concatenation 
	$LL_0$ is in the class $\mathcal{C}$.    
	Then, a finite extension of a 
	\ClinearC\ 
	group is \ClinearC.  
\end{theorem}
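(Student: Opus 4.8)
The plan is to fix coset representatives for $G$ in the larger group and to tag each normal form with the coset it occupies. Let $\hat G$ be a finite extension of the \ClinearC\ group $G$, so that $G$ has finite index $m$ in $\hat G$; choose right coset representatives $t_1=e,t_2,\dots,t_m$ with $\hat G=\bigsqcup_{j=1}^m G t_j$. Starting from the given representation $\psi:L\to G$ with $L\subseteq\Sigma^*$ in $\mathcal C$, I would introduce fresh symbols $\Sigma_0=\{c_1,\dots,c_m\}$ disjoint from $\Sigma$ and set $\hat L=L\Sigma_0$. Since $\Sigma_0$ is a finite language over an alphabet disjoint from $\Sigma$, the hypothesised closure property gives $\hat L\in\mathcal C$. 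I then define $\hat\psi:\hat L\to\hat G$ by $\hat\psi(u c_j)=\psi(u)t_j$; uniqueness of the decomposition $\hat g=g t_j$ together with bijectivity of $\psi$ makes $\hat\psi$ a bijection, so it remains only to produce the multiplication functions.

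Next I would describe right multiplication in these coordinates. By Proposition \ref{generator_independence} I may take as semigroup generators of $\hat G$ the set $S\cup\{t_j^{\pm1}\}$, but the construction works uniformly for any $s\in\hat G$. For each $j$ the element $t_j s$ lies in a unique coset, so $t_j s=g_j t_{\sigma(j)}$ with $g_j\in G$ and $\sigma(j)\in\{1,\dots,m\}$ determined by $j$ and $s$. Then $\hat\psi(u c_j)s=\psi(u)t_j s=\bigl(\psi(u)g_j\bigr)t_{\sigma(j)}$, so the target normal form should be $F_{g_j}(u)\,c_{\sigma(j)}$, where $F_{g_j}$ realises right multiplication by the fixed element $g_j\in G$. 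Since $g_j$ is a product of elements of $S$, the map $F_{g_j}$ is a finite composition of the functions $f_i$ of Definition \ref{quasiautdef1}, hence automatic, and it maps $L$ into $L$. I would therefore set $\hat f_s(u c_j)=F_{g_j}(u)c_{\sigma(j)}$, and let $\hat f_s$ be the identity on inputs not of this form; the commuting square and $\hat f_s(\hat L)\subseteq\hat L$ are then immediate from $F_{g_j}(L)\subseteq L$ and the identity $t_j s=g_j t_{\sigma(j)}$.

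The main obstacle is to verify that each $\hat f_s$ is p.f.\ linear--time computable, and here the position--faithful constraint bites: the marker $c_j$ that selects which transformation $F_{g_j}$ to apply sits at the \emph{right} end of the tape, whereas the rewriting of $u$ begins at the left. The resolution uses that $m$ is finite. A position--faithful one--tape machine first makes a single left--to--right sweep, checking that the input has the form $u c_j$ and recording the index $j\in\{1,\dots,m\}$ in its finite control; this costs $O(n)$ steps. It then returns the head to $\boxplus$, overwriting the marker by $\boxdot$ on the way, runs the linear--time machine for the fixed composition $F_{g_j}$ on $\boxplus u\boxdot^\infty$, and finally appends $c_{\sigma(j)}$ at the first blank after the output. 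By Lemma \ref{bounded_difference_lemma}, applied along a fixed word for $g_j$, the quantity $|u|-|F_{g_j}(u)|$ is bounded in absolute value by a constant independent of $u$, so locating and rewriting the terminal marker stays within a constant number of linear sweeps; the whole computation is therefore linear time, and $\hat f_s$ is automatic by the Case--Jain--Seah--Stephan equivalence. With $\hat L\in\mathcal C$, the bijection $\hat\psi$, and the automatic functions $\hat f_s$ in hand, $\hat G$ is \ClinearC, completing the proof.
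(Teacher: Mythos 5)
Your proof is correct and takes essentially the same route as the paper's: decompose the finite extension into cosets of $G$, tag each normal form with a coset marker over a fresh alphabet so the closure hypothesis gives $\hat L = L\Sigma_0 \in \mathcal{C}$, realise multiplication by a generator via $t_j s = g_j t_{\sigma(j)}$ with $F_{g_j}$ a finite composition of the given automatic functions, and implement this with a position--faithful machine that first sweeps right to read and erase the trailing marker, runs the subgroup machine, and writes the new marker at the first blank. The only cosmetic difference is that you tag every coset including the trivial one, whereas the paper takes $L_0=\{\epsilon,\sigma_1,\dots,\sigma_m\}$ so subgroup elements carry no marker; your explicit appeal to Lemma~\ref{bounded_difference_lemma} for relocating the terminal marker merely spells out what the paper dismisses as clearly linear time.
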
	
\begin{proof} 
	Let $H$ be a subgroup of finite index of 
	a group $G$. Suppose that $H$ is 
	\ClinearC.    
	Then there exists a 
	\ClinearC\ 
	representation $\psi : L \rightarrow H$ for some 
	language $L \subseteq \Sigma^*$ 
	in the class $\mathcal{C}$.    
	The following is similar to the argument  
	from \cite[Theorem~10.1]{KKM11} 
	which shows that a finite extension 
	of a Cayley automatic group is  Cayley automatic.  
	Every $g \in G$ is uniquely represented 
	as a product $g = h k$, where $h \in H$ and 
	$k \in K$ for some finite subset 
	$K = \{ k_0, k_1, \dots, k_m\}\subset G$ 
	that contains the identity: $k_0 = e \in G$.
	Let $\Sigma_0 = \{\sigma_1, \dots,\sigma_m\}$ for
	some symbols $\sigma_1, \dots,\sigma_m$, 
	which are not in $\Sigma$, and 
	$L_0 \subset \Sigma_0 ^*$ be the finite 
	language $L_0 = \{\epsilon,\sigma_1, \dots,\sigma_m\}$.
	We denote by $L'=LL_0$ the concatenation of 
	the languages $L$ and $L_0$. 
	By the assumption of the theorem, 
	the language $L'$ is in the class $\mathcal{C}$.
	Let $A = \{a_1,\dots, a_n\}$ be some set of 
	generators of $H$. Then $A \cup A^{-1} \cup K$ is 
	a set of semigroup generators for $G$.    
	
	We define a bijection $\psi' : L' \rightarrow G$ 
	as follows. For a given $w' \in L'$, 
	$w'$ is the concatenation:  
	$w' = w u$ for some $w \in L$ and $u \in L_0$.     
	Let $\varphi : L_0 \rightarrow K$ be a bijection 
	between $L_0$ and $K$ for which 
	$\varphi(\epsilon) = k_0, \varphi(\sigma_1)=k_1, \dots, 
	\varphi(\sigma_m) = k_m$. 
	We put $\psi' (w') = \psi(w) \varphi(u)$. 
	The right multiplication of $g \in G$,
	where 
	$g = hk$ is the unique representation
	of $g$, as above, by 
	$q \in A \cup A^{-1} \cup K$ 
	is given by the formula:
	$$ g q = hkq = h s_1 \dots s_\ell k_j, $$   
	for some $s_1, \dots, s_l \in A \cup A^{-1}$ and 
	$k_j \in K$ which depend only on 
	$k$ and $q$: $kq = s_1 \dots s_\ell k_j$. 
	An algorithm transforming the input 
	$\psi'^{-1}(g)$ to the output 
	$\psi'^{-1}(gq)$, implemented by 
	a position--faithful 
	one--tape Turing machine, is as follows. 
	First the head moves to the rightmost cell which 
	contains the symbol $\varphi^{-1}(k)$ 
	(or the blank symbol if $k=e$), reads it off, stores it 
	in the memory and changes it to the blank symbol; 
	then the head moves back to the initial cell. 
    The non--blank portion of the tape now consists of the string $\psi^{-1}(h)$. 
	After that an algorithm computing   
	multiplication by $s_1\dots s_\ell$  in the group 
	$H$ is run; once it is finished, the string 
	$\psi^{-1}(hs_1 \dots  s_\ell)$ is written on the tape.    
	Then the head moves to the first blank symbol 
	to change it to $\sigma_j$, 
	unless $k_j=e$ -- 
	in this case no action is needed, so the machine halts. 
	Then the head moves right to the next cell 
	and changes its content to the blank symbol 
	if it is non--blank.   
	After that 
	the machine halts. Now the string 
	$\psi^{-1}(hs_1\dots s_\ell)\varphi^{-1}(k_j)$, 
	which is equal to $\psi'^{-1}(gq)$,   
	is written on the tape.
	Clearly, at most linear time is required for this
	algorithm. Thus, $G$ is 
	\ClinearC. 
\end{proof} 
\begin{theorem}[Direct products]
	\label{directproducttheorem}	
	Assume that a class of languages $\mathcal{C}$ 
	satisfies the following closure property: 
	if $L_1 \subseteq \Sigma_1 ^*$ 
	and $L_2 \subseteq \Sigma_2 ^*$
	are languages in the class  
	$\mathcal{C}$ for which 
	$\Sigma_1 \cap \Sigma_2 = \varnothing$, 
	then the concatenation $L_1 L_2$ is in 
	the class $\mathcal{C}$.  
	Then, the direct product of two 
	\ClinearC\ groups 
	is \ClinearC. 
\end{theorem}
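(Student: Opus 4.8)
The plan is to start from $\mathcal{C}$-Cayley p.f.\ linear--time computable representations $\psi_1 : L_1 \rightarrow G_1$ and $\psi_2 : L_2 \rightarrow G_2$, with $L_i \subseteq \Sigma_i^*$ in $\mathcal{C}$ and automatic functions $f_i^{(1)}$, $f_j^{(2)}$ realizing right multiplication by generators of $G_1$ and $G_2$, and to build a representation of $G = G_1 \times G_2$ on the concatenation $L = L_1 L_2$. Using the standing assumption that $\mathcal{C}$ is closed under a change of symbols, I would first rename so that $\Sigma_1 \cap \Sigma_2 = \varnothing$; the concatenation hypothesis of the theorem then gives $L = L_1 L_2 \in \mathcal{C}$. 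Since the alphabets are disjoint, every $w \in L$ splits uniquely as $w = w_1 w_2$ with $w_1 \in L_1$ and $w_2 \in L_2$ (the boundary being the last $\Sigma_1$--symbol), so setting $\psi(w_1 w_2) = (\psi_1(w_1), \psi_2(w_2))$ yields a well--defined bijection $\psi : L \rightarrow G$.

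As a symmetric set of semigroup generators of $G$ I would take $S = \{(s,e_2) \mid s \in S_1\} \cup \{(e_1,s) \mid s \in S_2\}$, where $e_1,e_2$ are the identities and $S_1,S_2$ are the generating sets of $G_1,G_2$; these clearly generate the direct product. Right multiplication by $(s_i^{(1)},e_2)$ acts only on the first coordinate, so it must be realized by the function $F_i : w_1 w_2 \mapsto f_i^{(1)}(w_1)\, w_2$, and symmetrically $(e_1,s_j^{(2)})$ requires $F_j' : w_1 w_2 \mapsto w_1\, f_j^{(2)}(w_2)$. On inputs not of the form $\Sigma_1^* \Sigma_2^*$ I can define these however is convenient (splitting at the maximal $\Sigma_1$--prefix), since only the behaviour on $L$ matters; there $f_i^{(1)}(L_1) \subseteq L_1$ and $f_j^{(2)}(L_2) \subseteq L_2$ give $F_i(L), F_j'(L) \subseteq L$, and the commuting squares for $\psi$ follow immediately from those for $\psi_1$ and $\psi_2$.

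What remains is to check that $F_i$ and $F_j'$ are automatic, equivalently p.f.\ linear--time computable by the Case--Jain--Seah--Stephan equivalence. The map $F_j'$ is the easy case: a synchronous two--tape automaton can copy the common $\Sigma_1$--prefix $w_1$ verbatim and then simulate the automaton for $f_j^{(2)}$ on the trailing $\Sigma_2$--blocks, since $w_2$ and its image $f_j^{(2)}(w_2)$ both sit at the very end of their strings with nothing after them to realign.

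The main obstacle is $F_i$: applying $f_i^{(1)}$ to the prefix $w_1$ changes its length, which shifts the copied suffix $w_2$ on the output tape relative to the input tape. Here I would invoke the pumping--lemma bound used in the proof of Lemma~\ref{bounded_difference_lemma}, which shows that the synchronous automaton for $f_i^{(1)}$ changes length by at most a constant $K$ depending only on that automaton. A synchronous automaton for $F_i$ can therefore simulate the automaton for $f_i^{(1)}$ on the $\Sigma_1$--parts, record the resulting bounded lag $|f_i^{(1)}(w_1)| - |w_1| \in [-K,K]$ in a finite buffer once the $\Sigma_1$--part is exhausted, and then verify that the $\Sigma_2$--suffixes of input and output agree up to this fixed shift; since the shift is bounded by the constant $K$, only finitely much memory is needed, so the relation is regular and $F_i$ is automatic. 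Equivalently, in the machine model, a position--faithful one--tape Turing machine transforms $w_1$ into $f_i^{(1)}(w_1)$ and then shifts $w_2$ by the bounded amount, at cost $O(K|w_2|)$, i.e.\ linear time. This produces a $\mathcal{C}$--Cayley p.f.\ linear--time computable representation of $G_1 \times G_2$.
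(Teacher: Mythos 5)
Your proposal is correct and follows essentially the same route as the paper: the same decomposition $L = L_1L_2$ over disjoint alphabets with $\psi(w_1w_2) = (\psi_1(w_1),\psi_2(w_2))$, and the same key step of invoking the bounded-difference lemma to show the suffix $w_2$ need only be shifted (or realigned) by a constant amount, yielding linear time. The only cosmetic difference is that you verify automaticity of $F_i$ via a synchronous two--tape automaton with a finite lag buffer, whereas the paper argues directly in the position--faithful machine model (encoding overlaps with product symbols and shifting the suffix); these are equivalent by the Case--Jain--Seah--Stephan correspondence you cite.
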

\begin{proof}
	Let $G_1$ and $G_2$ be two 
	\ClinearC\ 
	groups. Then there exist 
	\ClinearC\  
	representations $\psi_1 : L_1 \rightarrow G_1$ 
	and $\psi_2 : L_2 \rightarrow G_2$ for some 
	languages 
	$L_1 \subseteq \Sigma_1 ^*$ and 
	$L_2 \subseteq \Sigma_2 ^*$ in the 
	class $\mathcal{C}$ for which 
	$\Sigma_1 \cap \Sigma_2 = \varnothing$.     
	We denote by $L$ the concatenation: 
	$L = L_1 L_2$. By the assumption of the 
	theorem the language $L$ is in the class 
	$\mathcal{C}$. Let $A = \{a_1,\dots,a_{n_1}\}$ 
	and $B = \{b_1,\dots,b_{n_2}\}$ be some sets
	of generators for the groups $G_1$ and $G_2$, respectively.  
	Then $A \cup A^{-1} \cup B \cup B^{-1}$ is a set of 
	semigroup generators for the group 
	$G = G_1 \times G_2$. 
	The groups $G_1$ and $G_2$ can be considered  
	as subgroups of $G$. Every group
	element $g \in G$ can be uniquely 
	represented as the product: $g= g_1 g_2$, 
	where $g_1 \in G_1$ and $g_2 \in G_2$.        
	
	Let $L = L_1 L_2$ and $\psi : L \rightarrow G$ be a bijection defined 
	as follows. For a given $w \in L$, let $w$ be the 
	concatenation $w=uv$ for some $u \in L_1$ and 
	$v \in L_2$. We put 
	$\psi (w) = \psi_1(u)\psi_2(v)$. The right 
	multiplication of $g = g_1 g_2$, where $g_1 \in G_1$ 
	and $g_2 \in G_2$, by 
	$q \in A \cup A^{-1} \cup B \cup B^{-1}$ 
	is given by 
	$gq = (g_1 q)g_2$ if $q \in A \cup A^{-1}$ and 
	$gq = g_1 (g_2 q)$ if $q \in B \cup B^{-1}$. 
	For the case $q \in A \cup A^{-1}$, an algorithm 
	transforming the input $\psi^{-1}(g) = 
	\psi_1 ^{-1}(g_1) \psi_2 ^{-1}(g_2)$ to the 
	output $\psi^{-1}(gq)$, implemented by a position--faithful one--tape 
	Turing machine, is as follows. First 
	it transforms the prefix
	$\psi_1^{-1}(g_1)$ to the prefix 
	$\psi_1^{-1}(g_1 q)$.   
	If overlapping with the substring 
	$\psi_2^{-1}(g_2)$ occurs, then it can be encoded by 
	$2$--tuple symbols:  		
	if symbols 
	$\beta$ and $\gamma$ appear on the same cell of the tape,
	then it can be encoded by the
	$2$--tuple symbol 
	$\beta \choose \gamma$.
	After that the 
	algorithm shifts the substring $\psi_2^{-1}(g_2)$ 
	either to the left or to the right, so it is written
	right after the string $\psi_1 ^{-1}(g_1q)$.        
	By Lemma \ref{bounded_difference_lemma}, 
	only shifting (left or right) by a constant 
	number of cells is needed.  Therefore, 
	at most linear time is required for our algorithm. 
	If  $q \in B \cup B^{-1}$, an algorithm just 
	updates the suffix $\psi_2 ^{-1}(g_2)$ 
	to the suffix  $\psi_2 ^{-1}(g_2 q)$ while 
	the prefix $\psi_1 ^{-1}(g_1)$ remains unchanged.   
	Clearly, at most linear time is needed for this 
	algorithm to be implemented by a one--tape 
	position--faithful Turing machine.  
	Finally we conclude that  $G$ is a \ClinearC\ group.                       
\end{proof}

\begin{theorem}[Free products]
	\label{freeproducttheorem}	
	Assume that a class of languages $\mathcal{C}$ 
	satisfies the following closure 
	properties:
	\begin{enumerate}[(a)]
		\item{if a nonempty language $L$,
			for which the empty string 
			$\epsilon \notin L$, is in the class 
			$\mathcal{C}$, 
			then for every $w \in L$ the language 
			$\left(L \setminus \{w\} \right) 
			\vee \{\epsilon\}$} must be in the class 
		$\mathcal{C}$; 
		\item{if $L_1 \subseteq \Sigma_1 ^*$ 
			and $L_2 \subseteq \Sigma_2 ^*$ 
			are languages, 
		    which contain the empty string 
			$\epsilon \in  L_1, L_2$ and
			for which $\Sigma_1 \cap  \Sigma_2 = \varnothing$, 
			are in the class $\mathcal{C}$, 
			then the language 
			$L= (L_1' L_2')^* \vee
			(L_1' L_2')^* L_1' \vee 
			(L_2' L_1')^* \vee         
			(L_2' L_1')^* L_2' 
			\vee \{\epsilon\}$ 
			must be in the class $\mathcal{C}$, where 
			$L_1 ' = L_1 \setminus \{\epsilon\}$
			and $L_2 ' = L_2 \setminus \{\epsilon\}$.}      
	\end{enumerate}
	Then, the free product of two 
	\ClinearC\ groups is \ClinearC. 
\end{theorem}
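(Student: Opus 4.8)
The plan is to build a normal form for $G=G_1*G_2$ by concatenating normal forms coming from the two factors, exploiting disjointness of alphabets so that the factorisation of a word into alternating blocks is unambiguous. Let $\psi_1:L_1\to G_1$ and $\psi_2:L_2\to G_2$ be \ClinearC\ representations with generating sets $A$ for $G_1$ and $B$ for $G_2$, and automatic functions $g_a$ ($a\in A\cup A^{-1}$) and $h_b$ ($b\in B\cup B^{-1}$) realising right multiplication; renaming symbols if necessary (using the standing assumption that $\mathcal C$ is closed under a change of alphabet) we may take $\Sigma_1\cap\Sigma_2=\varnothing$. The first step is to arrange that $\epsilon$ represents the identity in each factor. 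If $\epsilon\in L_i$ I would left--translate the bijection, replacing $\psi_i$ by $w\mapsto \psi_i(\epsilon)^{-1}\psi_i(w)$; this leaves $L_i$ and every $g_a,h_b$ unchanged, since $(\psi_i(\epsilon)^{-1}g)s=\psi_i(\epsilon)^{-1}(gs)$, while sending $\epsilon$ to the identity. If $\epsilon\notin L_i$ I would apply closure property (a) with $w=\psi_i^{-1}(e_{G_i})$ to obtain $(L_i\setminus\{w\})\vee\{\epsilon\}\in\mathcal C$, in which $\epsilon$ now names the identity, patching the finitely many special cases of the automatic functions whose input or output equals the fixed string $w$. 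After this normalisation $\epsilon\in L_i$, $\psi_i(\epsilon)=e_{G_i}$, and $L_i'=L_i\setminus\{\epsilon\}$ is exactly the set of representatives of non--identity elements.

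Next I would take $L$ to be the language displayed in hypothesis (b), which by that hypothesis lies in $\mathcal C$, and observe that its members are precisely the alternating concatenations $c_1c_2\cdots c_m$ whose blocks lie alternately in $L_1'$ and $L_2'$, together with $\epsilon$. Because $\Sigma_1\cap\Sigma_2=\varnothing$, the maximal runs of $\Sigma_1$-- and $\Sigma_2$--symbols recover this block decomposition uniquely, so each $w\in L$ determines a unique tuple of blocks. Setting $\psi(w)=\psi_{i_1}(c_1)\cdots\psi_{i_m}(c_m)\in G$ and $\psi(\epsilon)=e_G$, the normal form theorem for free products shows $\psi:L\to G$ is a bijection, with semigroup generating set $S=A\cup A^{-1}\cup B\cup B^{-1}$.

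The core of the argument is to show that right multiplication by each $s\in S$ is automatic; I would treat $s=a\in A\cup A^{-1}$, the case $s\in B\cup B^{-1}$ being symmetric. Multiplying $\psi(w)$ on the right by $a\in G_1$ affects only the final $G_1$--block, and crucially there is \emph{no cascading cancellation}, since a single generator touches only that one block. Writing $w=w_0u$ with $u$ the maximal $\Sigma_1$--suffix (so $w_0$ is empty or ends in $\Sigma_2$), I would define $f_a(w)=w_0\,g_a(u)$ when $g_a(u)\neq\epsilon$ and $f_a(w)=w_0$ when $g_a(u)=\epsilon$ (the factor cancels); and when $w=\epsilon$ or $w$ ends in $\Sigma_2$, I would set $f_a(w)=w\,r_a$ where $r_a=g_a(\epsilon)\in L_1'$ is the fixed representative of $a$ (and $f_a(w)=w$ in the degenerate case $a=e_{G_1}$). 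One then checks directly that $f_a(L)\subseteq L$ and $\psi(f_a(w))=\psi(w)a$ for all $w\in L$.

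The main obstacle is establishing that each $f_a$ is automatic (equivalently, p.f.\ linear--time computable). The subtlety is that the block $u$ on which $g_a$ must act sits at the right--hand end of the word, so the position--faithful one--tape machine for $g_a$, which expects its argument immediately after $\boxplus$, cannot simply be reused in linear time. I would instead argue through the two--tape synchronous automaton characterisation of automatic functions: a (nondeterministic) synchronous automaton reading $(w,f_a(w))$ can copy and verify the common prefix $w_0$ symbol--by--symbol, which keeps the two heads aligned because $w_0$ has equal length on both tapes, and then simulate the synchronous automaton recognising $\mathrm{Graph}(g_a)$ on the synchronised pair $(u,g_a(u))$, the bounded length change being absorbed by the padding symbol exactly as for $g_a$. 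To locate where $w_0$ ends and $u$ begins, the automaton guesses this position nondeterministically and verifies that no $\Sigma_2$--symbol occurs afterwards; this is legitimate because, by the equivalence of Case, Jain, Seah and Stephan cited above, nondeterministic position--faithful linear time coincides with automatic and with deterministic position--faithful linear time. Appending the fixed string $r_a$ in the remaining case is plainly automatic. Hence every $f_a$, and symmetrically every $f_b$, is automatic, so $\psi:L\to G$ is a \ClinearC\ representation of $G_1*G_2$.
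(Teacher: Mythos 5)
Your proposal is correct and follows the same overall construction as the paper's proof: normalise each factor so that $\epsilon$ represents the identity (using closure property (a) when $\epsilon\notin L_i$), take $L$ to be the alternating--block language of hypothesis (b) with $\psi$ defined blockwise via the normal form theorem for free products, and realise right multiplication by a generator of $G_1$ by updating the final $\Sigma_1$--block (deleting it upon cancellation) or appending a fresh block $\psi_1^{-1}(a)$ when the word ends in $\Sigma_2$ or is empty. You deviate in two places, both to your advantage. First, in the case $\epsilon\in L_i$ with $\psi_i(\epsilon)\neq e$, the paper swaps the $\psi_i$--values of $\epsilon$ and of the word representing the identity and then asserts the patched map is still a \ClinearC\ representation; your left--translation $w\mapsto\psi_i(\epsilon)^{-1}\psi_i(w)$ is cleaner, since it commutes with all right multiplications and so leaves the automatic functions literally untouched. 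Second, and more substantively, the paper simply describes the suffix--update algorithm on a position--faithful one--tape machine and declares it linear time, implicitly assuming the factor machine for $g_a$ can be run on a block sitting at the far end of the tape rather than immediately after $\boxplus$; you identify this as the delicate point and instead verify automaticity of $f_a$ through the two--tape synchronous automaton characterisation, guessing the block boundary nondeterministically and invoking the Case--Jain--Seah--Stephan equivalence. This is a legitimate and arguably more rigorous route; the only detail to add is that the automaton must also verify that the symbol immediately preceding the guessed boundary lies in $\Sigma_2$ (or that the boundary is at position zero), since verifying only that no $\Sigma_2$--symbol occurs afterwards would let a guess inside $u$ produce accepted pairs $(w,w_0u_1g_a(u_2))$ not in the graph of $f_a$ --- a one--state fix that is implicit in your definition of $w_0$ but should be said when claiming the recognised relation is exactly $\mathrm{Graph}(f_a)$.
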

\begin{proof}       
	Let $G_1,G_2$ be  
	\ClinearC\ groups. 
	There exist \ClinearC\ representations 
	$\psi_1: L_1 \rightarrow G_1$ and 
	$\psi_2: L_2 \rightarrow G_2$ for some languages 
	$L_1 \subseteq \Sigma_1 ^*$ and 
	$L_2 \subseteq \Sigma_2 ^*$ in the class 
	$\mathcal{C}$ for which 
	$\Sigma_1 \cap \Sigma_2 = \varnothing$.   
	Suppose that $\epsilon \in L_1$ and
	for some string $w \in L_1$, 
	$w \not= \epsilon$:  $\psi_1 (w) =e$ in $G_1$. 
	Let $\psi_1' : L_1 \rightarrow G_1$ be a bijective 
	map for which $\psi_1' (u)=\psi_1 (u)$ for all  
	$u \in L_1 \setminus \{\epsilon,w\}$ and 
	$\psi_1'(w)=\psi(\epsilon)$ and 
	$\psi_1'(\epsilon)=e$.
    Let us show that $\psi_1': L_1 \rightarrow G_1$ is a  
    \ClinearC\ representation.
      Let $A = \{ a_1, \dots, a_{n_1}\}$ be a
      set of generators of the group $G_1$. 
      For a given $s \in A \cup A^{-1}$, we denote by
      $f_s : \Sigma_1 ^* 
       \rightarrow \Sigma_1^*$ 
      a p.f. linear--time computable function
      for which $f_s(L_1) \subseteq L_1$ and for 
      every $u \in L_1$: $\psi_1(f_s (u)) =\psi_1(u) s$. 

    Let $v,v'\in L_{1}$ be the strings defined by the identities $f_{s}(v)=\epsilon$ and $f_{s}(v')=w$.
    Let us assume that $v,v'$ are not equal to neither $\epsilon$ nor $w$.
    We define a function $f_{s}' : \Sigma_{1}^{*} \rightarrow\Sigma_{1}^{*}$ as follows.
    Let $f_{s}'(u)=f_{s}(u)$ for all $u\in\Sigma_{1}^{*}\setminus\{\epsilon,w,v,v'\}$ and $f_{s}'(\epsilon)=f_{s}(w)$, $f_{s}'(w)=f_{s}(\epsilon)$, $f_{s}'(v)=w$ and $f_{s}'(v')=\epsilon$.   
    Let us prove that $\psi_{1}'(f_{s}'(u))=\psi_{1}'(u)s$ 
    for all $u \in L_1$. 
    If $u \in L_1$ is not in the set $\{\epsilon,w,v,v'\}$,
    then
    $\psi_{1}'(f'_{s}(u))=\psi_{1}'(f_{s}(u))=\psi_{1}(f_{s}(u))=\psi_{1}(u)s=\psi_{1}'(u)s$.    
    Furthermore, $\psi_{1}'(f'_{s}(\epsilon))=\psi_{1}'(f_{s}(w))=
    \psi_{1}(f_{s}(w))=\psi_{1}(w)s=\psi_{1}'(\epsilon)s$,
    $\psi_{1}'(f'_{s}(w))=\psi_{1}'(f_{s}(\epsilon))=
    \psi_{1}(f_{s}(\epsilon))=\psi_{1}(\epsilon)s
    =\psi_{1}'(w)s$,
    $\psi_{1}'(f_{s}'(v))=\psi_{1}'(w)=\psi_{1}(\epsilon)
    =\psi_{1}(f_{s}(v))=\psi_{1}(v)s=\psi_{1}'(v)s$ 
    and $\psi_{1}'(f_{s}'(v'))=\psi_{1}'(\epsilon)=
    \psi_{1}(w)=\psi_{1}(f_{s}(v'))=\psi_{1}(v')s=
    \psi_{1}'(v')s$.
    
    Let us show that $f_{s}'$ is p.f. linear--time computable by a deterministic one–tape Turing machine. First the algorithm checks whether the input   
    string is $\epsilon,w,v$ or $v'$; if that is so, it writes on the tape $f_{s}(w),f_{s}(\epsilon),w$ or $\epsilon$, respectively, and halts. Clearly, this requires at most constant amount
    of time. If the input string is not $\epsilon,w,v$ or $v'$, the algorithm proceeds as an 
    algorithm for computing $f_{s}$. Therefore, the function $f_{s}'$ is
    p.f. linear--time computable by a deterministic one–tape Turing machine. The cases when $v$ or $v'$ are equal to either $\epsilon$ or $w$ are considered in a similar way.

	Now suppose that $\epsilon \not\in L_1$. Let 
	$w $ be a word from $L_1$ for which $\psi_1(w)=e$. 
	By the property (a), the language 
	$L_1'' = (L_1 \setminus \{w\}) \vee \{\epsilon\}$ 
	is in the class $\mathcal{C}$. 
	Let $\psi_1 '' : L_1'' \rightarrow G_1$ 
	be a bijective map for which 
	$\psi_1 '' (u) = \psi_1(u)$ for all 
	$u \in L_1 \setminus  \{w\}$ and 
	$\psi_1 ''(\epsilon) = e$. 
	By an argument similar to the above 
	it can be shown that  $\psi_1 '': L_1''\rightarrow G_1$ is a \ClinearC\ representation.
	Therefore, we can always assume that 
	$\epsilon \in L_1$ and $\psi_1(\epsilon)=e$  
	in $G_1$ and, similarly,
	$\epsilon \in L_2$ and $\psi_2(\epsilon)=e$ in $G_2$. 
	
	The groups $G_1$ and $G_2$ are naturally embedded  
	in the free product $G = G_1 \star G_2$, so we consider them 
	as the subgroups of $G$. 
	Now let $L= (L_1' L_2')^* \vee
	(L_1' L_2')^* L_1' \vee 
	(L_2' L_1')^* \vee         
	(L_2' L_1')^* L_2' 
	\vee \{\epsilon\}$, where  
	$L_1 ' = L_1 \setminus \{ \epsilon \}$ and 
	$L_2 ' = L_2 \setminus \{ \epsilon \}$. 
	By the property (b), the language $L$ is 
	in the class $\mathcal{C}$. 
	We define a bijection  
	$\psi: L \rightarrow G$ as follows. 
	We put $\psi (\epsilon) = e$. For 
	$w = u_1 v_1 \dots u_n v_n \in (L_1 ' L_2 ')^*$, 
	where $u_i \in L_1 '$ and $v_i \in L_2 '$ for  
	$i=1, \dots, n$, we put:  
	$\psi(w) = \psi_1 (u_1) \psi_2 (v_1) \dots 
	\psi_1 (u_n) \psi_2 (v_n)$.      
	For $w \in (L_1' L_2')^* L_1',  (L_2' L_1')^*$ and  
	$(L_2' L_1')^* L_2'$, $\psi (w)$ is defined in 
	a similar way.  
	
	Let $B = \{b_1, \dots, b_{n_2} \}$ be a set of 
	generators for the group $G_2$. 
	Then $A \cup A^{-1} \cup B \cup B^{-1}$ is a 
	set of semigroup generators for the group 
	$G = G_1 \star G_2$.  
	For a given $g \in G$ let us assume that  
	$\psi^{-1} (g) = u_1 v_1 \dots u_n v_n  
	\in (L_1 ' L_2 ')^*$.     
	If $q \in B \cup B^{-1}$, an algorithm 
	transforming the input $\psi^{-1}(g)$ 
	to the output $\psi^{-1}(gq)$ updates the 
	suffix $v_n$ to the suffix 
	$\psi_2 ^{-1}(\psi_2 (v_n) q)$ while the 
	prefix $u_1 v_1 \dots v_{n-1}u_n$ remains unchanged.    
	If $q \in A \cup A^{-1}$, an algorithm 
	simply attaches the string $\psi_1 ^{-1} (q)$ to 
	$\psi^{-1}(g)$ as a suffix.  
	For $\psi^{-1}(g) \in (L_1' L_2')^* L_1' \vee 
	(L_2' L_1')^* \vee         
	(L_2' L_1')^* L_2'$ an algorithm transforming 
	$\psi^{-1}(g)$ to $\psi^{-1}(gq)$ is 
	realised in a similar way.
	The case $g=e$ is trivial.  
	Clearly, this algorithm can be implemented  
	by a one--tape position--faithful Turing 
	machine in linear time. 
	Thus, the group $G$ is 
	\ClinearC. 
\end{proof}
\begin{remark}
\label{remark_properties_language_freeprod}
   In Theorem \ref{freeproducttheorem},
   closure property (b), the language 
   $L$ is the set of all concatenations 
   $w_1w_2 \dots w_k$ of non--empty strings 
   $w_i \in L_1 \vee L_2$, $i=1,\dots,k$  
   for which none of the consecutive strings 
   $w_i$ and $w_{i+1}$ belong to the same 
   language $L_1$ or $L_2$. 
   The case $k=0$ corresponds to the 
   empty string $\epsilon$.
\end{remark}	
\begin{remark} 
	\label{remarkconditions1}	
	We note that the conditions imposed on the 
	class $\mathcal{C}$ 
	in Theorems \ref{finiteextensionthm}, \ref{directproducttheorem} 
	and \ref{freeproducttheorem}
	are  weak. 
	These conditions 
	are satisfied for many 
	classes of languages including,
	e.g., regular,  
	(deterministic) context--free, 
	(deterministic) context--sensitive, 
	recursive, 
	$k$--counter, $k$--context--free.     
\end{remark}	

\begin{theorem}[Finitely generated subgroups]
	\label{subgroupsofquasiCayleyarequasiCayley}   
	A finitely generated subgroup of a 
	\linearC\ group is \linearC. 
\end{theorem}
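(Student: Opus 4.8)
The plan is to realise $H$ directly inside the ambient representation of $G$, by restricting $\psi$ to the preimage of $H$ and building the multiplication functions for $H$ as compositions of those already available for $G$. Concretely, fix a \linearC\ representation $\psi : L \rightarrow G$ with $L \subseteq \Sigma^*$ and automatic functions $f_1, \dots, f_n$ associated to a semigroup generating set $S = \{s_1, \dots, s_n\}$ of $G$; by Proposition~\ref{generator_independence} the choice of $S$ is immaterial. Write $H = \langle h_1, \dots, h_k\rangle$ and let $T = \{h_1^{\pm 1}, \dots, h_k^{\pm 1}\}$, a finite symmetric semigroup generating set for $H$. Set $L' = \psi^{-1}(H) \subseteq \Sigma^*$ and let $\psi' = \psi|_{L'}$, which is a bijection onto $H$ since $\psi$ is a bijection.

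Next I would produce, for each $t \in T$, a p.f. linear--time computable function witnessing right multiplication by $t$ on $L'$. Since $t \in G$ and $S$ generates $G$ as a semigroup, write $t = s_{i_1}\cdots s_{i_m}$ and define $g_t = f_{i_m} \circ \cdots \circ f_{i_1}$. As in the proof of Proposition~\ref{generator_independence}, a composition of automatic functions is again automatic, so each $g_t$ is p.f. linear--time computable; finite generation of $H$ is what ensures there are only finitely many such functions. It then remains to check the two defining conditions of Definition~\ref{quasiautdef1}. Starting from any $w \in L'$, the inclusions $f_i(L) \subseteq L$ guarantee that every intermediate value of the composition lies in $L$, so the identities $\psi(f_i(\cdot)) = \psi(\cdot)\,s_i$ may be chained to give $\psi(g_t(w)) = \psi(w)\,s_{i_1}\cdots s_{i_m} = \psi(w)\,t$. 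Because $\psi(w) \in H$ and $t \in H$, this product lies in $H$, whence $g_t(w) \in \psi^{-1}(H) = L'$; thus $g_t(L') \subseteq L'$ and $\psi'(g_t(w)) = \psi'(w)\,t$ for all $w \in L'$. Therefore $\psi' : L' \rightarrow H$ is a \linearC\ representation and $H$ is \linearC.

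The point to emphasise---rather than a genuine obstacle---is that no constraint is placed on the language class of $L'$. For a fixed class $\mathcal{C}$ the preimage $\psi^{-1}(H)$ need not belong to $\mathcal{C}$, even when $L$ does; indeed Proposition~\ref{Mikhailova_example} exhibits a case where $L'$ fails to be recursive. This is precisely why the statement is phrased for \linearC\ groups (the union over all classes $\mathcal{C}$) rather than for \ClinearC\ groups with $\mathcal{C}$ fixed. The only technical item demanding any care is the propagation of the commuting--diagram identity through the composition $g_t$, which is handled by repeatedly invoking the closure property $f_i(L)\subseteq L$; everything else is bookkeeping.
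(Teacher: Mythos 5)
Your proof is correct and takes essentially the same route as the paper's: both restrict $\psi$ to $L' = \psi^{-1}(H)$ and realise right multiplication by each generator of $H$ as a composition $f_{j_m} \circ \cdots \circ f_{j_1}$ of the automatic functions for $G$, invoking the argument of Proposition~\ref{generator_independence} for closure of automatic functions under composition. Your closing observation that $L'$ need not remain in the class $\mathcal{C}$ likewise matches the paper's remark after the theorem, which points to Proposition~\ref{Mikhailova_example} for an example where $L'$ is not even recursive.
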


\begin{proof} 
	Let $G$ be a \linearC\ group 
	and $S=A \cup A^{-1} = \{s_1,\dots,s_n\}$ be a set of semigroup 
	generators of $G$.   
	Then there is a \linearC\ representation  
	$\psi : L \rightarrow G$ for some language 
	$L \subseteq \Sigma^*$.  
	Let $H \leqslant G$ be a finitely generated subgroup 
	of $G$ and $S' = A' \cup A'^{-1}= 
	\{s_1', \dots, s_k '\}$ be a 
	set of semigroup generators of $H$. Let 
	$L' = \psi^{-1}(H) \subset L$.  
	We define $\psi': L' \rightarrow H$ as the 
	restriction of $\psi$ onto $L'$: 
	for a given $w \in L'$, $\psi'(w) = \psi(w)$. 
	In order to prove that the representation 
	$\psi': L' \rightarrow H$ is \linearC\ we repeat the argument from  
	Proposition \ref{generator_independence}.  
	Let $f_i : \Sigma^* \rightarrow \Sigma^*$  
	be automatic functions corresponding 
	to multiplications in $G$ by the semigroup 
	generators $s_i$, for $i =1, \dots, n$  respectively: 
	$\psi (f_i (w)) = \psi (w) s_i$ for all $w \in L$.  
	For a given $j =1, \dots , k$ there exist 
	$j_1, \dots, j_m$ for which 
	$s_j ' = s_{j_1} \dots s_{j_m}$. 
	For every $j=1,\dots,k$ the function 
	$f_j' = f_{j_m} \circ \dots \circ f_{j_1}$ is automatic, 
	$f_j ' (L') \subseteq L'$ and 
	$\psi' (f_j '(w)) = \psi ' (w) s_j '$ for all $w \in L'$. 
	Therefore, the group $H$ is \linearC. 
\end{proof}
\begin{remark}	 
	We remark that the language $L'$ in the proof of 
	Theorem \ref{subgroupsofquasiCayleyarequasiCayley} 
	is not necessarily in the same class as the language 
	$L$. An illustrative example, when 
	$L$ is a regular language but 
	$L'$ is not recursive, is 
	shown in Proposition \ref{Mikhailova_example}.     
\end{remark}
\subsection{Relation with $\mathcal{C}$--graph Automatic Groups}\label{subsec:CGA}

%

Let  $\Sigma$ be a finite alphabet and the symbol 
$\diamond$  not in $\Sigma$. We define
$\Sigma_\diamond = \Sigma \cup \{\diamond\}$.      
For two given strings 
$u_1,u_2 \in \Sigma^*$, 
the convolution $u_1 \otimes u_2$   
is the string of length 
$\max\{|u_1|,|u_2|\}$
over the alphabet
$\Sigma_\diamond^2$ for which the $i$th symbol 
is $\sigma_i ^1 \choose \sigma_i ^2$, 
where $\sigma_i ^k$  is the $i$th symbols of
$u_k$ if $i \leqslant |u_k|$ and $\diamond$, otherwise, 
for $k =1,2$ and  
$i= 1,\dots, \max\{|u_1|,|u_2|\}$.

In order to extend the class of Cayley automatic groups, 
the second author and Taback introduced the notion of a 
$\left(\mathcal{B},\mathcal{C}\right)$--graph 
automatic group \cite{ElderTabackCgraph}.  
Let $G$ be a group, $S$ be a symmetric generating set of $G$ and $\Sigma$ be a finite alphabet. 
A tuple $(G,S,\Sigma)$ is said to be 
$(\mathcal{B},\mathcal{C})$--graph 
automatic if there is a bijection 
$\psi : L \rightarrow G$ between a language 
$L \subseteq \Sigma ^*$ from the class 
$\mathcal{B}$ and a group $G$ such that 
for every $s \in S$  the  language 
$L_s = \{u \otimes v \, | \, u,v \in L,
\psi(u)s = \psi(v)\}$ is in the class $\mathcal{C}$. 
If $\mathcal{B}=\mathcal{C}$, then the tuple 
$(G,S,\Sigma)$ is said to be $\mathcal{C}$--graph 
automatic. 
\begin{theorem}
	\label{CquasiCayley_are_Cgraphautomatic}
	Assume that a class of languages 
	$\mathcal{C}$ satisfies the following properties: 
	\begin{enumerate}[(a)] 
		\item{if $L \subseteq \Sigma^*$ is some language in 
			the class $\mathcal{C}$, then 
			$L \otimes \Sigma^*= 
			\{u \otimes v | u \in  L, v \in \Sigma^*\}$ is in the class $\mathcal{C}$;} 
		\item{if $R$ is a regular language and $L$ is in
			the class $\mathcal{C}$, then $R \cap L$ is 
			in the class $\mathcal{C}$.}   
	\end{enumerate}   
	Then, for a given 
	\ClinearC\ group $G$, the tuple $(G,S,\Sigma)$  is	$\mathcal{C}$--graph automatic for some alphabet 
	$\Sigma$ and every symmetric generating set $S$.  
\end{theorem}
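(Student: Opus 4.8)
The plan is to verify the single requirement in the definition of $\mathcal{C}$--graph automaticity directly from the data witnessing that $G$ is \ClinearC. That data consists of a language $L \subseteq \Sigma^*$ in $\mathcal{C}$, a bijection $\psi : L \rightarrow G$, and p.f.~linear--time computable multiplier functions $f_i : \Sigma^* \rightarrow \Sigma^*$ with $f_i(L) \subseteq L$ and $\psi(f_i(w)) = \psi(w)s_i$. Keeping this same $L$, $\Sigma$ and $\psi$, and taking $S = \{s_1,\dots,s_n\}$, it suffices to show that for each generator $s_i$ the relation language $L_{s_i} = \{u \otimes v \mid u,v \in L,\ \psi(u)s_i = \psi(v)\}$ lies in $\mathcal{C}$.

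First I would rewrite $L_{s_i}$ in terms of the multiplier $f_i$. For $u \in L$ we have $f_i(u) \in L$, and $\psi(f_i(u)) = \psi(u)s_i$; since $\psi$ is a bijection on $L$, the condition $\psi(u)s_i = \psi(v)$ with $v \in L$ is equivalent to $v = f_i(u)$. Hence $L_{s_i} = \{u \otimes f_i(u) \mid u \in L\}$, which is the convolution of the graph of $f_i$ restricted to the domain $L$. Next I would invoke the equivalence of Case, Jain, Seah and Stephan recalled earlier: each $f_i$, being p.f.~linear--time computable, is automatic, so its graph convolution $R_i = \{u \otimes f_i(u) \mid u \in \Sigma^*\}$ is a \emph{regular} language over the convolution alphabet. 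Regularity here (rather than mere membership in $\mathcal{C}$) is exactly what allows hypothesis (b) to be applied.

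Now I combine the two closure hypotheses. By (a) the language $L \otimes \Sigma^*$ lies in $\mathcal{C}$, and by (b) its intersection with the regular language $R_i$ remains in $\mathcal{C}$. The only point to check is the set--theoretic identity $(L \otimes \Sigma^*) \cap R_i = \{u \otimes f_i(u) \mid u \in L\} = L_{s_i}$: a convolution string determines its two tracks uniquely, so membership in $L \otimes \Sigma^*$ forces the first track into $L$, while membership in $R_i$ forces the second track to equal the image of the first under $f_i$. This places every $L_{s_i}$ in $\mathcal{C}$; together with $L \in \mathcal{C}$ (so $\mathcal{B} = \mathcal{C}$), the tuple $(G,S,\Sigma)$ is $\mathcal{C}$--graph automatic for this $\Sigma$, and by Proposition~\ref{generator_independence} the choice of symmetric generating set is immaterial. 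I do not anticipate a genuine obstacle; the only care needed is the bookkeeping of the padding symbol in the convolution so that the intersection identity is exact, and the observation that the multiplier graphs are regular, which is precisely the hook for the weaker intersection hypothesis (b).
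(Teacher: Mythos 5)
Your proposal is correct and matches the paper's own proof essentially line for line: both arguments express $L_{s_i}$ as $(L \otimes \Sigma^*) \cap R_i$, using the Case--Jain--Seah--Stephan equivalence to get regularity of the graph convolution $R_i$ and then applying hypotheses (a) and (b). Your additional care in checking the set--theoretic identity (via $f_i(L) \subseteq L$ and bijectivity of $\psi$) is a welcome explicit detail that the paper leaves implicit.
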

\begin{proof} 	  
	Let $G$ be a \ClinearC\ 
	group for some class $\mathcal{C}$ satisfying the 
	conditions (a) and (b) of the theorem. 
	Then there exists a \ClinearC\ representation 
	$\psi : L \rightarrow G$ for some language 
	$L \subseteq \Sigma^*$ in the class $\mathcal{C}$. 
	By the condition (a), the language
	$L \otimes \Sigma^*$ is in the class  
	$\mathcal{C}$. Let $A$ be a set of generators 
	of $G$. For a given semigroup generator 
	$s \in S =  A \cup A^{-1}$ there exists an 
	automatic function 
	$f_s : \Sigma^* \rightarrow \Sigma^*$  
	such that $f_s (L) \subseteq L$ and 
	$\psi (f_s(w)) = \psi(w)s$ for all $w \in L$. 
	Since $f_s$ is automatic, the language  
	$R_s = \{u \otimes f_s (u) \, |\, u \in \Sigma^*  \} 
	\subseteq \Sigma^* \otimes \Sigma^*$ is regular. 
	Therefore, by the condition (b), the language 
	$ (L \otimes \Sigma^*) \cap R_s$ is in the class 
	$\mathcal{C}$. 
	Thus, for every $s \in S$ the  
	language $\{u \otimes v  \, | \, 
	u,v\in L, \psi (u) s = \psi (v) \} = 
	(L \otimes \Sigma^*) \cap R_s$  is 
	in the class $\mathcal{C}$, so 
	$(G,S,\Sigma)$ is $\mathcal{C}$--graph automatic. 
\end{proof}	
\begin{remark}
	We note that the condition imposed on the class $\mathcal{C}$ in Theorem \ref{CquasiCayley_are_Cgraphautomatic} 
	is satisfied for a wide family of languages 
	including all those mentioned in Remark 
	\ref{remarkconditions1}.   
\end{remark}

\subsection{Examples} 
 \label{examples_GL_nQ}
Thurston proved that an automatic 
nilpotent group must be virtually abelian 
\cite{Epsteinbook}. 
Kharlampovich, Khoussainov and 
Miasnikov showed that every f.g. nilpotent 
group of nilpotency class at most  two is 
Cayley automatic \cite{KKM11}. 
However, it is 
conjectured that 
there exists a f.g. nilpotent group of nilpotency 
class three which is not Cayley automatic 
\cite{SemiautomaticNilpGroups}. 
The main purpose of this subsection is to show     
that \linearC\ groups comprise 
a wide family of groups including all 
f.g. subgroups of $\mathrm{GL}(n,\mathbb{Q})$. 
This implies that all polycyclic 
groups are \linearC. 
The latter, in 
particular, shows that all f.g. nilpotent groups are \linearC. 
The groups  $\mathrm{SL}(n,\mathbb{Z})$ are 
also \linearC.\footnote{We recall that 
	$\mathrm{SL}(2,\mathbb{Z})$ is automatic; so, it 
	is also Cayley automatic. 
	It is not known whether the groups 
	$\mathrm{SL}(n,\mathbb{Z})$ for $n>2$ are 
	Cayley automatic or not.}     
\begin{theorem}
	\label{fgsubgroupglnq}	  
	A finitely generated subgroup of 
	$\mathrm{GL}(n,\mathbb{Q})$ is  \linearC.   
\end{theorem}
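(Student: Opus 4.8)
The plan is to reduce to a single finitely generated overgroup with a concrete normal form and then invoke the finitely generated subgroup theorem. Let $H = \langle g_1,\dots,g_r\rangle \leqslant \mathrm{GL}(n,\mathbb{Q})$. Since each $g_i^{\pm 1}$ has only finitely many rational entries, there is a single integer $m$ (the product of all primes dividing a denominator of an entry of some $g_i^{\pm 1}$) such that every $g_i^{\pm 1}$ has entries in $\mathbb{Z}[1/m]$; moreover $\det g_i \in \mathbb{Z}[1/m]$ and $(\det g_i)^{-1}\in\mathbb{Z}[1/m]$, so $\det g_i \in \mathbb{Z}[1/m]^\times$ and hence $H \leqslant \mathrm{GL}(n,\mathbb{Z}[1/m])$. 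It therefore suffices to show that $\mathrm{GL}(n,\mathbb{Z}[1/m])$ is \linearC, after which Theorem~\ref{subgroupsofquasiCayleyarequasiCayley} delivers that its finitely generated subgroup $H$ is \linearC\ as well. (Alternatively, one may skip the overgroup and apply the construction below directly to a finite generating set of $H$.)

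The core of the argument is to equip the additive group $M_n(\mathbb{Z}[1/m]) \cong \mathbb{Z}[1/m]^{n^2}$ with an automatic structure. I would first fix an automatic presentation of $(\mathbb{Z}[1/m],+)$: represent $x \in \mathbb{Z}[1/m]$ uniquely as an integer part together with an $m$-ary fractional part, i.e.\ as a finite $m$-ary string aligned at the radix point, so that the graph of addition is a synchronous-regular relation (carries are absorbed by finite-state guessing, and differing denominators are handled by the convolution padding, so no unbounded shift is ever needed). With respect to this encoding, multiplication by any fixed constant $c \in \mathbb{Z}[1/m]$ has regular graph and is therefore an automatic function. Convolving $n^2$ copies of this encoding gives an automatic structure $\Phi : \mathcal{L} \to M_n(\mathbb{Z}[1/m])$. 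Now for each fixed $s \in \mathrm{GL}(n,\mathbb{Z}[1/m])$ the right multiplication $A \mapsto As$ is a fixed $\mathbb{Z}[1/m]$-linear endomorphism of $M_n(\mathbb{Z}[1/m])$ — each output entry $(As)_{ij}=\sum_k A_{ik}s_{kj}$ is a fixed $\mathbb{Z}[1/m]$-linear combination of the input entries — so it is a finite composition of additions and fixed-constant multiplications, hence automatic; equivalently, by the Case--Jain--Seah--Stephan equivalence, it is p.f.~linear--time computable. Setting $L = \Phi^{-1}\!\big(\mathrm{GL}(n,\mathbb{Z}[1/m])\big)$ and $\psi = \Phi|_L$, and taking $S$ to be a finite symmetric generating set, the automatic functions $f_s$ (defined on all of $\Sigma^*$ as right multiplication by $s$) satisfy $f_s(L)\subseteq L$ and $\psi(f_s(w))=\psi(w)s$. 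Note that $L$ encodes exactly the invertible matrices whose determinant is a unit of $\mathbb{Z}[1/m]$; this is not a regular (indeed possibly not even a recursive) condition, but Definition~\ref{quasiautdef1} with $\mathcal{C}$ unrestricted places no constraint on $L$, which is precisely why the \emph{\linearC} framework, rather than Cayley automaticity, is the correct target.

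The main obstacle is the first technical claim: that $(\mathbb{Z}[1/m],+)$ admits an automatic presentation in which both addition and multiplication by each fixed scalar are automatic, despite elements having unbounded and differing $m$-power denominators. The subtlety is that naive ``clear denominators and reduce'' computations require variable-length shifts, which are not achievable by synchronous automata; the resolution is to work with the radix-point-aligned $m$-ary encoding and to verify at the level of \emph{relations} (regular synchronous graphs) that alignment and carry propagation across the point stay finite-state. This is the one place where I would either cite the known FA-presentability of $(\mathbb{Z}[1/m],+)$ or give the explicit encoding and the automata recognising the graphs of addition and of each fixed-scalar multiplication. Everything else — passing to $M_n$ by convolution, expressing right multiplication by a generator as a fixed linear map, and restricting $\Phi$ to $L$ — is routine bookkeeping, and the finite generation of $H$ (together with Theorem~\ref{subgroupsofquasiCayleyarequasiCayley}) finishes the proof. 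In particular this shows all polycyclic, and hence all finitely generated nilpotent, groups are \linearC.
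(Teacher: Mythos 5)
Your proposal is essentially the paper's proof: the same common--denominator reduction placing everything in $M_n(\mathbb{Z}[1/k])$, the same appeal to an FA--presentation of $(\mathbb{Z}[1/k],+)$ in which multiplication by each fixed scalar is automatic, the same convolution encoding of matrices with right multiplication by a fixed generator realised as a fixed $\mathbb{Z}[1/k]$--linear (hence automatic) map, and the same restriction of the bijection to the possibly non--regular preimage of the group, which is exactly why the target is \linearC\ rather than Cayley automatic. Your primary phrasing detours through the overgroup $\mathrm{GL}(n,\mathbb{Z}[1/m])$ and Theorem~\ref{subgroupsofquasiCayleyarequasiCayley}, which would additionally require verifying that $\mathrm{GL}(n,\mathbb{Z}[1/m])$ is finitely generated (the \linearC\ notion is defined only for f.g.\ groups), but the alternative you yourself note --- applying the construction directly to a finite generating set of $H$ --- is precisely the paper's argument, so the proof is correct as given.
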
	
\begin{proof} 
	Let $G$ be a f.g. subgroup of 
	$\mathrm{GL}(n,\mathbb{Q})$ and $S$ be a 
	set of semigroup generators of $G$. 
	Each $s \in S$ corresponds to a 
	matrix $M_s \in \mathrm{GL}(n,\mathbb{Q})$ 
	with rational coefficients 
	$m_{s,ij} = \frac{p_{s,ij}}{q_{s,ij}}$
	for $i,j =1,\dots,n$, 
	where $p_{s,ij}, q_{s,ij} \in \mathbb{Z}$ and
	$q_{s,ij}>0$.   
	Now we notice that there exist an integer $k>0$
	and integers $r_{s,ij}$ such that 
	$m_{s,ij} = \frac{r_{s,ij}}{k}$ for all 
	$s \in  S$ and $i,j =1,\dots, n$; 
	for example, one can put 
	$k = \prod\limits_{s \in S} 
	\prod \limits_{i,j =1}^{n} q_{s,ij}$. 
	Therefore, we may assume  
	that for all $s \in S$ and 
	$i,j =1,\dots,n$: 
	$m_{s,ij} \in \mathbb{Z}[1/k]$, where 
	$\mathbb{Z}\left[1/k\right]$ is the abelian group of 
	all rational numbers of the form 
	$\frac{d}{k^\ell}$ for $d, \ell \in \mathbb{Z}$ and 
	$\ell \geqslant 0$. 
	For example, if $k =10$, then 
	$\mathbb{Z}\left[1/k\right]$ is just 
	the group of all finite fractional 
	decimal numbers, i.e., the rational 
	numbers for which the number of digits 
	after the dot is finite.   
	Since all coefficients of the matrices 
	$M_s$, $s \in S$ are in 
	$\mathbb{Z}\left[1/k\right]$, 
	then for every matrix from $G$ 
	the coefficients of this matrix are 
	also in $\mathbb{Z}[1/k]$. That is, 
	$G$ consists of matrices with coefficients 
	from $\mathbb{Z}\left[1/k\right]$. Therefore, 
	$G \subset M_n (\mathbb{Z}\left[1/k\right])$, 
	where $M_n (\mathbb{Z}\left[1/k\right])$ is the
	ring of $n \times n$ matrices with coefficients in 
	$\mathbb{Z}\left[1/k\right]$.

	The abelian group 
	$\left(\mathbb{Z}\left[1/k\right],+\right)$ 
	is FA--presentable, 
	see the proof, e.g., in 
	\cite{NiesSemukhin07}. 
	If $k=10$,  
	then one can simply use the standard 
	decimal representation of numbers from   
	$\mathbb{Z}\left[1/k\right]$.
	For other values of $k$, one can use a 
	representation in base $k$.   
	Let us choose any FA--presentation of 
	$\left( \mathbb{Z} \left[ 1/k \right], +\right)$, i.e., 
	a bijection 
	$\varphi : L_1 \rightarrow \mathbb{Z}\left[1/k\right]$ 
	from some regular language $L_1$ to         
	$\mathbb{Z}\left[1/k\right]$ for which 
	the relation $R_{+} = \{ \left(u,v,w \right) 
	\in L_1 \times L_1 \times L_1 \,|\,\varphi(u) + 
	\varphi(v) = \varphi(w) \}$ is FA--recognizable.
	The latter also implies that multiplication 
	by any fixed number $t = \frac{p}{k^i} \in 
	\mathbb{Z}\left[1/k \right]$ is FA--recognizable.
	That is,  
	the relation 
	$R_t = \{\left(u, v\right)  \in L_1 \times L_1 \, | \, 
	\varphi (u) t  =\varphi(v)\}$
	is FA--recognizable.  
	Now, every matrix
	$C \in M_n(\mathbb{Z}\left[1/k\right])$ 
	with coefficients 
	$c_{ij} \in  \mathbb{Z}\left[1/k\right]$ 
	for $i,j=1,\dots,n$ we represent as 
	the convolution 
	$\varphi^{-1}(c_{11}) \otimes \varphi^{-1}(c_{12})
	\otimes  \dots \otimes \varphi^{-1}(c_{nn})$. 
	The collection of all such convolutions 
	form a regular language
	$L_n 
	= \{u_{11} \otimes \dots \otimes u_{nn}\, |\, 
	u_{ij} \in L_1, i,j=1,\dots,n\}$. 
	This  gives the  bijection 
	$\varphi_n : L_n \rightarrow M_n (\mathbb{Z}\left[1/k\right])$
	between $L_n$ and 
	$M_n (\mathbb{Z}\left[1/k\right])$. 
	
	For a given matrix $C$, 
	the result of 
	the multiplication of $C$ 
	by a matrix $M_s$ for $s \in S$ is 
	given by the following: 
	for given $i,j=1,\dots, n$, the 
	coefficient $d_{ij}$ of the matrix $D = C M_s$ 
	equals 
	$d_{ij} = c_{i1} m_{s,1j} + \dots + c_{in} 
	m_{s,nj}$. 
	Therefore, since $R_+$ and 
	$R_t$ for all $t \in \mathbb{Z}\left[1/k\right]$
	are FA--recognizable, the  
	relation 
	$R_s = \{ \left(u,v \right) \in L_n \times L_n \, |\, 
	\varphi_n(u) M_s = \varphi_n (v) \}$ is 
	FA--recognizable for every $s \in S$. 
	Let $L = \{ w \in L_n \, | \, \varphi_n (w ) \in G\}$ 
	and $\psi$ be the restriction of $\varphi_n$ onto 
	$L$. Then $\psi : L \rightarrow G$ is a 
	\linearC\ representation of $G$. 	 
\end{proof} 

\begin{corollary} 
	\label{virtpolquasiCayley_cor}  
	A virtually polycyclic group is 
	\linearC. 
\end{corollary}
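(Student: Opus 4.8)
The plan is to realise a virtually polycyclic group as a finite extension of a group already known to be \linearC, and then invoke the finite--extension closure result proved above. Recall that a virtually polycyclic group $G$ contains, by definition, a polycyclic subgroup $H$ of finite index, so $G$ is a finite extension of $H$. Since polycyclic groups are finitely generated and finite extensions of finitely generated groups are finitely generated, $G$ is itself finitely generated and thus fits the setting of Definition \ref{quasiautdef1}. The proof then splits into two steps: first show that the polycyclic group $H$ is \linearC, and then bootstrap from $H$ to $G$.

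For the first step I would appeal to the classical Auslander--Swan theorem, which asserts that every polycyclic group embeds as a subgroup of $\mathrm{GL}(n,\mathbb{Z})$ for some $n$. Since $\mathrm{GL}(n,\mathbb{Z}) \leqslant \mathrm{GL}(n,\mathbb{Q})$, the subgroup $H$ is isomorphic to a finitely generated subgroup of $\mathrm{GL}(n,\mathbb{Q})$, and Theorem \ref{fgsubgroupglnq} then gives at once that $H$ is \linearC. This is precisely the assertion, recorded in the text preceding the corollary, that all polycyclic groups are \linearC.

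It remains to pass from $H$ to $G$ via Theorem \ref{finiteextensionthm}. The only subtlety is that Theorem \ref{finiteextensionthm} is phrased for a fixed language class $\mathcal{C}$ subject to a concatenation--with--finite--language closure hypothesis, whereas the corollary asks only for the class--free conclusion. This is not an obstacle: being \linearC\ means being $\mathcal{C}$--Cayley p.f.\ linear--time computable for \emph{some} class $\mathcal{C}$, so we may take $\mathcal{C}$ to be the class of all languages, which trivially satisfies the hypothesis of Theorem \ref{finiteextensionthm}; equivalently, in the class--free setting the proof of that theorem uses the closure hypothesis only to certify $L' = LL_0 \in \mathcal{C}$, a step that is now vacuous. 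Applying Theorem \ref{finiteextensionthm} to the finite extension $H \leqslant G$ then yields that $G$ is \linearC, completing the argument.

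The only genuinely nontrivial ingredient is the Auslander--Swan embedding of polycyclic groups into $\mathrm{GL}(n,\mathbb{Z})$; everything else is a direct appeal to Theorems \ref{fgsubgroupglnq} and \ref{finiteextensionthm}. I note that one could bypass the finite--extension step entirely, since the Auslander--Swan theorem in fact embeds every polycyclic--by--finite (i.e.\ virtually polycyclic) group into $\mathrm{GL}(n,\mathbb{Z})$, so $G$ itself is a finitely generated subgroup of $\mathrm{GL}(n,\mathbb{Q})$ and Theorem \ref{fgsubgroupglnq} applies to $G$ directly; the two routes reach the same conclusion.
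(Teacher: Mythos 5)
Your argument is correct, and its mathematical core --- embedding a polycyclic group into $\mathrm{SL}(n,\mathbb{Z})\leqslant \mathrm{GL}(n,\mathbb{Q})$ via Auslander and then invoking Theorem~\ref{fgsubgroupglnq} --- is exactly the paper's. Where you genuinely diverge is in the reduction from virtually polycyclic to polycyclic. You pass from the finite-index polycyclic subgroup $H$ \emph{up} to $G$ via the finite-extension closure result, Theorem~\ref{finiteextensionthm}, correctly noting that in the class-free setting one may take $\mathcal{C}$ to be the class of all languages, for which both the concatenation hypothesis of that theorem and the paper's standing assumption (closure under a change of symbols) hold trivially. The paper instead disposes of the reduction in one line by citing the finitely generated subgroup theorem, Theorem~\ref{subgroupsofquasiCayleyarequasiCayley}; as literally stated, that theorem runs in the opposite direction --- it passes from a \linearC\ group \emph{down} to its f.g.\ subgroups, whereas here one must pass up from $H$ to its finite extension $G$ --- so your appeal to Theorem~\ref{finiteextensionthm} is the more watertight bookkeeping. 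Your closing remark supplies the other legitimate repair of this step: the Auslander--Swan theorem in fact embeds every polycyclic-by-finite group into $\mathrm{GL}(n,\mathbb{Z})$, so Theorem~\ref{fgsubgroupglnq} applies to $G$ directly and no closure theorem is needed at all, which is plausibly what the paper's one-line reduction was gesturing at. In short: both of your routes prove the corollary, and each clarifies a citation that the paper leaves imprecise; the paper's version buys brevity at the cost of invoking the subgroup theorem where a finite-extension (or stronger embedding) argument is what is actually doing the work.
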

\begin{proof}
	By Theorem \ref{finiteextensionthm}, 
	it is enough only to show that a polycyclic group is
	\linearC. 
	Auslander showed that a polycyclic group has 
	a faithful representation in 
	$\mathrm{SL}(n,\mathbb{Z})$ 
	\cite{Auslander67}. 
	Therefore, a polycyclic group is isomorphic 
	to a f.g. subgroup of $\mathrm{GL}(n,\mathbb{Q})$
	which is \linearC\   
	by Theorem \ref{fgsubgroupglnq}.       	 
\end{proof}

%
%

\section{Cayley Polynomial--Time Computable Groups} 
\label{Cayleypoltimecomp_section} 

The notion of a \linearC\ group 
can be extended further to that  of a 
\polyC\ group which we introduce in this section.

We say that a function 
$f : \Sigma^* \rightarrow \Sigma^*$ is 
polynomial--time computable if 
it is computed by a deterministic
one--tape Turing machine in time $O(p(n))$, 
where $p(n)$ is a polynomial and $n$ is a 
length of the input.  
Note that  (in contrast to the linear--time case) 
restricting to position--faithful Turing machines
has no effect: a function computed by a deterministic 
one--tape Turing machine in polynomial time can be  computed by a deterministic position--faithful one--tape Turing machine in polynomial time by performing the same steps and at the end copying the output to the front of the tape 
(this takes at most polynomial time).

Let $G$ be a f.g. group 
and $S= \{s_1, \dots, s_n \} \subseteq G$ be 
a finite set of semigroup generators of $G$. 
Let  $\mathcal{C}$ be a nonempty class of 
languages. 
\begin{definition}[Cayley polynomial--time computable groups] 
	\label{definition_Cayley_polynomial}	
	We say that the group $G$ is 
	$\mathcal{C}$--Cayley polynomial--time  
	computable if there exist a language  
	$L \subseteq \Sigma^*$ from the class 
	$\mathcal{C}$ over some finite alphabet 
	$\Sigma$, a bijective mapping 
	$\psi : L \rightarrow  G$ between the 
	language $L$ and the group $G$ 
	and polynomial--time computable  
	functions 
	$f_i : \Sigma ^* \rightarrow \Sigma^*$ 
	such that $f_i (L) \subseteq L$ and for 
	every $w \in L$: 
	$   \psi (f_i (w)) = \psi(w) s_i$,    
	for all $i=1,\dots,n$. We call 
	$\psi : L \rightarrow G$ a 
	$\mathcal{C}$--Cayley polynomial--time 
	computable representation of the group 
	$G$. If the requirement for $L$ to be 
	in a specific class $\mathcal{C}$ is omitted, 
	then we just say that $G$ is a Cayley 
	polynomial--time computable group and
	$\psi: L \rightarrow G$ is a Cayley 
	polynomial--time computable representation 
	of $G$.  
\end{definition}  
A \ClinearC\ group is \CpolyC.   
Similarly to \ClinearC\ 
groups, the notion of a $\mathcal{C}$--Cayley 
polynomial--time computable group does not depend
on the choice of generators.  
\begin{proposition} 
	\label{ind_choice_gen_caylepol}	 
	The notion of a \CpolyC\ 
	group does not depend on the choice 
	of generators. 	
\end{proposition}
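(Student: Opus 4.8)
The plan is to mirror exactly the proof of Proposition \ref{generator_independence}, replacing ``automatic function'' with ``polynomial--time computable function'' throughout, and the only point requiring attention is that compositions of polynomial--time computable functions remain polynomial--time computable. First I would start from a \CpolyC\ representation $\psi : L \rightarrow G$ for a given set of semigroup generators $S = \{s_1,\dots,s_n\}$, together with the polynomial--time computable functions $f_i : \Sigma^* \rightarrow \Sigma^*$ satisfying $f_i(L) \subseteq L$ and $\psi(f_i(w)) = \psi(w) s_i$ for all $w \in L$. I would then take an arbitrary second set of semigroup generators $S' = \{s_1',\dots,s_k'\}$ and, for each $j$, write $s_j' = s_{j_1} \cdots s_{j_m}$ as a product of elements of $S$, defining the candidate function for right multiplication by $s_j'$ to be the composition $f_j' = f_{j_m} \circ \cdots \circ f_{j_1}$.

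The verification that $f_j'(L) \subseteq L$ and that $\psi(f_j'(w)) = \psi(w) s_j'$ for all $w \in L$ is identical to the linear--time case and follows by an immediate induction on $m$ using the defining properties of the $f_i$. The substantive difference is the complexity bookkeeping. Here I would argue that if $g$ is computable by a one--tape Turing machine in time $O(p(n))$ and $h$ in time $O(q(n))$ for polynomials $p, q$, then $h \circ g$ is computable in polynomial time: on input of length $n$, computing $g$ takes time $O(p(n))$, and since a Turing machine can write at most one new cell per step, the intermediate output $g(x)$ has length at most $n + O(p(n))$, hence bounded by a polynomial $N(n)$; feeding this to the machine for $h$ then costs $O(q(N(n)))$ steps, and the composite $q \circ N$ is again a polynomial. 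Iterating this over the fixed finite composition $f_{j_m} \circ \cdots \circ f_{j_1}$ (of fixed length $m$ depending only on $j$, not on the input) keeps the total running time polynomial in $|w|$.

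The main obstacle---and it is a mild one---is precisely this length--growth argument: unlike the automatic (linear--time) setting, where one invokes the bounded difference lemma to control intermediate string lengths, in the polynomial--time setting one must instead observe directly that the output length of a polynomial--time machine is polynomially bounded in the input length, so that the domain sizes fed into successive machines in the composition stay polynomially controlled. Once this is noted, each $f_j'$ is polynomial--time computable, so $\psi : L \rightarrow G$ is simultaneously a \CpolyC\ representation for the generating set $S'$. I would conclude, exactly as in Proposition \ref{generator_independence}, that the class of \CpolyC\ groups does not depend on the choice of semigroup generators.
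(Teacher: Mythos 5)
Your proposal is correct and follows essentially the same route as the paper: the paper's proof simply notes that a composition of polynomial--time computable functions is polynomial--time computable and then repeats the proof of Proposition~\ref{generator_independence} verbatim with ``automatic'' replaced by ``polynomial--time''. Your additional length--growth argument (that a machine running in time $O(p(n))$ produces output of length at most $n + O(p(n))$, so iterated composition stays polynomially bounded) correctly fills in the detail the paper leaves implicit.
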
	
\begin{proof} 
	We first notice that if the given functions 
	$f_{j_i}: \Sigma^* \rightarrow \Sigma^*, i=1,\dots,m$ 
	are polynomial--time computable, then the 
	composition $f_{j_m} \circ \dots \circ f_{j_1}$ 
	is polynomial--time computable.
	The rest literally repeats 
	the proof of Proposition \ref{generator_independence} 
	modulo changing the term automatic 
	(p.f. linear--time) 
    to polynomial--time.  
\end{proof}	
\begin{remark}
	We note that if the degree of a polynomial 
	$p(n)$ is greater than one, then the 
	composition $g \circ f$ of two functions 
	$f$ and $g$ computed in time $O(p(n))$ is 
	in general not necessarily computed in 
	time $O(p(n))$; we may only guarantee it is 
	computed in time $O(p(p(n)))$. So, 
	fixing an upper bound for the time complexity 
	in Definition \ref{definition_Cayley_polynomial}
	one  loses the independence 
	on the choice generators. An alternative 
	approach could be to use a more powerful 
	computational model (for example, 
	a two--tape Turing machine) and 
	force the time complexity to be at most linear.  
	In this case one gets  
	the independence on the choice generators without
	needing to update the time complexity.   
\end{remark}	 

 We say that $G$ is  
 \CpolyC\ with quasigeodesic normal form if 
 there is a \CpolyC\ representation 
 $\psi : L \rightarrow G$ which has quasigeodesic
 normal form, see Definition \ref{quasigeodesicnf_def}. 
 If the requirement for $L$ to be in a specific 
 class $\mathcal{C}$ is omitted, then we simply say
 that $G$ is \polyC\ with quasigeodesic 
 normal form.
Note that a \polyC\
representation $\psi: L \rightarrow G$ does not 
necessary have quasigeodesic normal form 
like every \linearC\ representation, see Theorem \ref{quasigeodesicnormalformthm}. 
Therefore, the argument used in the proof of Theorem 
\ref{quadratic_alg_theorem1} cannot be generalized 
for an arbitrary Cayley--polynomial time computable representation. However, the following  
analogue of Theorem \ref{quadratic_alg_theorem1} 
holds: 
\begin{theorem}[Computing normal form in 
	polynomial time]
	\label{normalform_poltime}	              
	Suppose that a Cayley polynomial--time 
	computable representation $\psi : L \rightarrow G$ 
	has quasigeodesic normal form. 
	Then there is an algorithm 
	which for a given input word 
	$v = s_1\dots s_k \in (A \cup A^{-1})^*$ 
	computes the string $u \in L$ for which 
	$\psi(u) = \pi (v)$. Moreover, this algorithm 
	can be implemented by a deterministic 
	one--tape Turing machine 
	in polynomial time.   
\end{theorem}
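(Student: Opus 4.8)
The plan is to reuse the iterative scheme of Theorem~\ref{quadratic_alg_theorem1} almost verbatim, substituting the quasigeodesic hypothesis for the role played there by the bounded difference lemma. I would start from the string $u_0 \in L$ with $\psi(u_0) = e$, write $v = b_1 \cdots b_k$, and for each generator $s \in S = A \cup A^{-1}$ fix a deterministic one--tape Turing machine $TM_s$ computing $f_s$ in polynomial time. Since $S$ is finite, I may take a single polynomial $p$ bounding the running times of all the $TM_s$. Setting $u_j = f_{b_j}(u_{j-1})$, induction gives $\psi(u_j) = \pi(b_1 \cdots b_j)$, so the output $u_k$ is exactly the sought normal form.

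The key step, and the only one where the hypothesis enters, is bounding the lengths of the intermediate strings $u_j$. Here I would argue that since $\psi(u_j)=\pi(b_1\cdots b_j)$ is a product of $j$ generators we have $d_A(\psi(u_j)) \leqslant j$, whence the quasigeodesic normal form assumption yields a constant $C$ with $|u_j| \leqslant C\bigl(d_A(\psi(u_j))+1\bigr) \leqslant C(j+1)$. Thus every intermediate normal form has length $O(k)$, linear in the length of the input.

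With this estimate the time analysis is routine: step $j$ runs $TM_{b_j}$ on an input of length at most $C(k+1)$, costing $O\bigl(p(C(k+1))\bigr)$ steps, so the $k$ evaluations together cost $O\bigl(k\,p(C(k+1))\bigr)$, a polynomial in $k$. To realise the iteration on a single tape I would, as permitted by the remark opening this section (position--faithfulness being free of charge in polynomial time), keep the unprocessed suffix of $v$ and the current normal form in two regions separated by a marker symbol, reading off $b_j$, invoking $TM_{b_j}$ on the normal--form region, and shifting the suffix when the normal form grows; since both regions stay of length $O(k)$ throughout, this bookkeeping adds only a further polynomial overhead.

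The part that requires care, rather than genuine difficulty, is recognising why the quasigeodesic hypothesis is indispensable. As the remark preceding the statement warns, composing $k$ polynomial--time functions need not be polynomial when string lengths are unbounded; an intermediate $u_j$ of super--polynomial length would make even a single subsequent evaluation too expensive. The quasigeodesic bound is precisely what caps every $u_j$ at length $O(k)$, collapsing the potential tower of composed polynomials into $k$ applications of one fixed polynomial in $k$. I would therefore present the length bound as the heart of the proof and treat the single--tape implementation as a direct adaptation of Theorem~\ref{quadratic_alg_theorem1}.
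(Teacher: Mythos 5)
Your proposal is correct and follows essentially the same route as the paper: the paper's proof likewise reruns the iterative scheme of Theorem~\ref{quadratic_alg_theorem1}, replacing the bounded difference lemma by the quasigeodesic estimate $|u_{j-1}| \leqslant C(d_A(b_1\cdots b_{j-1})+1) \leqslant Ck + C$, and concludes that each of the $k$ applications of a fixed polynomial--time machine runs on an input of length $O(k)$, giving polynomial total time. Your additional discussion of the single--tape layout and of why the quasigeodesic hypothesis blocks the tower of composed polynomials is sound elaboration of details the paper leaves implicit.
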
	
\begin{proof} 
	The proof repeats Theorem \ref{quadratic_alg_theorem1}
	modulo the following minor changes. 
	Since $\psi: L \rightarrow G$ has
	quasigeodesic normal form, for every string 
	$u_{j-1} = \psi^{-1}(s_1 \dots s_{j-1})$,  
	$j=1,\dots,k$, the following inequality is satisfied: 
	$|u_{j-1}| \leqslant  C(d_A (s_1 \dots s_{j-1}) +1)
	\leqslant C (j-1) + C \leqslant C k + C$ 
	for some constant $C$. 
	Therefore, polynomial time is required to compute 
	the string $u_{j}$ from $u_{j-1}$. So the total 
	time required to compute the string $u_k$ from 
	the input $s_1\dots s_k$ is polynomial.         
\end{proof}	 
Similarly to Corollary \ref{quasiCayleywordproblem} 
we immediately obtain the following. 
\begin{corollary}[Solving word problem in polynomial time]
	\label{wordproblempoltime_cor}   
	If a given group $G$ is \polyC\ with 
    quasigeodesic normal form, the 
	word problem in $G$ can be solved by a deterministic 
	one--tape Turing machine in polynomial time.  	
\end{corollary}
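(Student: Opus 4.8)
The plan is to mirror the proof of Corollary \ref{quasiCayleywordproblem}, substituting the polynomial--time normal form algorithm of Theorem \ref{normalform_poltime} for the quadratic--time algorithm of Theorem \ref{quadratic_alg_theorem1}. First I would fix, once and for all, the string $u_0 \in L$ with $\psi(u_0)=e \in G$. Since $u_0$ depends only on the given \polyC\ representation and not on the input, it can be hard--coded into the Turing machine as a constant string.

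Given an input word $v = b_1 \dots b_k \in (A \cup A^{-1})^*$, the algorithm would first run the procedure of Theorem \ref{normalform_poltime} to produce the normal form $u \in L$ with $\psi(u) = \pi(v)$ in $G$; by that theorem (whose hypothesis that $\psi$ has quasigeodesic normal form is exactly our standing assumption) this step takes time polynomial in $k$. The machine would then compare $u$ with $u_0$ symbol by symbol: if $u = u_0$ it accepts, concluding $\pi(v)=e$, and otherwise it rejects, concluding $\pi(v)\neq e$. Correctness is immediate from injectivity of $\psi$, and the whole computation is carried out by a deterministic one--tape Turing machine.

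The only point requiring care is that the final comparison step is itself polynomial, and this is precisely where the quasigeodesic hypothesis is used. Because $\psi$ has quasigeodesic normal form, there is a constant $C$ with $|u| \leqslant C(d_A(\psi(u))+1) \leqslant C(k+1)$, so $u$ (and the fixed $u_0$) have length linear in $k$, making the symbol--by--symbol comparison cost at most linear time. Adding this to the polynomial cost of computing $u$ keeps the total runtime polynomial. I do not expect any genuine obstacle here: the statement is a direct consequence of Theorem \ref{normalform_poltime}, and the quasigeodesic bound is what rules out a super--polynomially long normal form that could otherwise spoil either the computation or the comparison.
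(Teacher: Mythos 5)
Your proposal is correct and follows essentially the same route as the paper, which proves this corollary exactly as in Corollary~\ref{quasiCayleywordproblem}: compute the normal form $u$ of $\pi(v)$ via Theorem~\ref{normalform_poltime} and compare it with the fixed string $u_0$ representing the identity. Your extra remark that the quasigeodesic bound $|u|\leqslant C(k+1)$ keeps the comparison step cheap is a sound (if unstated in the paper) detail, not a deviation.
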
	
Clearly, the analogue of Theorem \ref{L_recursivelyenumerable_quasiautomatic} holds 
for a Cayley polynomial--time computable 
representation 
$\psi : L \rightarrow G$. 
\begin{theorem} 
	\label{caypoltime_normforms_recenum}	
	For every Cayley polynomial--time computable 
	representation $\psi : L \rightarrow G$ the 
	language $L$ is in the class \RE.   
\end{theorem}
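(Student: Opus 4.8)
The plan is to adapt the proof of Theorem \ref{L_recursivelyenumerable_quasiautomatic} to the polynomial--time setting, the crucial observation being that recursive enumerability requires only that each multiplication function halt, not that it run within any prescribed time bound. In particular, and in contrast to Theorem \ref{normalform_poltime}, no assumption of quasigeodesic normal form is needed here, since that hypothesis was used there solely to control the running time.

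First I would fix the string $u_0 = \psi^{-1}(e) \in L$ representing the identity, together with deterministic one--tape Turing machines $TM_i$ computing the polynomial--time functions $f_i$. Given any word $v = b_1 \dots b_k \in (A \cup A^{-1})^*$, I would compute $\psi^{-1}(\pi(v))$ exactly as in Theorem \ref{quadratic_alg_theorem1}: feed $u_0$ to $TM_{b_1}$, then feed the output to $TM_{b_2}$, and so on, obtaining after $k$ stages a string $u_k$. Since $f_i(L) \subseteq L$ and $\psi(f_i(w)) = \psi(w)s_i$ for every $i$, an easy induction on $j$ gives $\psi(u_j) = \pi(b_1 \dots b_j)$, so $u_k = \psi^{-1}(\pi(v)) \in L$; and because each $TM_i$ halts on every input, this computation always terminates (its total running time is finite, even if we make no effort to bound it).

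Next I would invoke the fact that $S = A \cup A^{-1}$ generates $G$ as a semigroup: every $g \in G$ equals $\pi(v)$ for some $v \in S^*$, and since $\psi$ is a bijection, every string of $L$ is of the form $\psi^{-1}(\pi(v))$ for some $v \in S^*$. Enumerating $S^*$ in a systematic order and outputting $\psi^{-1}(\pi(v))$ for each $v$ therefore lists every element of $L$ (possibly with repetitions), which is precisely what is required to conclude $L \in \RE$. There is no substantial obstacle; the only points needing care are to recognise that the quasigeodesic hypothesis of Theorem \ref{normalform_poltime} is irrelevant for mere enumeration, and to note that $u_0$ is a fixed finite string determined by the representation and so may be taken as given.
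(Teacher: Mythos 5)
Your proof is correct and follows essentially the same route as the paper, which states Theorem~\ref{caypoltime_normforms_recenum} as the direct analogue of Theorem~\ref{L_recursivelyenumerable_quasiautomatic}: enumerate $(A\cup A^{-1})^*$ and output $\psi^{-1}(\pi(v))$ for each $v$, computed by iterating the machines for the $f_i$ starting from $u_0=\psi^{-1}(e)$. You also correctly pinpoint the one point worth making explicit, namely that the quasigeodesic hypothesis of Theorem~\ref{normalform_poltime} is needed only to bound running time, while mere totality (halting) of each $f_i$ suffices for recursive enumerability.
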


Furthermore, all closure properties
with respect to taking a finite extension, 
the direct product, the free product and a finitely 
generated subgroup, shown in Theorems \ref{finiteextensionthm}, 
\ref{directproducttheorem}, \ref{freeproducttheorem} 
and \ref{subgroupsofquasiCayleyarequasiCayley}, 
respectively, remain valid 
for Cayley polynomial--time computable groups and 
the ones with quasigeodesic normal forms. Namely, we have the 
following.
\begin{theorem}[Finite extensions, direct products, 
	free products] 
	\label{Finite_extension_DP_FP_CPTC_thm}	 
	For a given class of languages $\mathcal{C}$, 
	assuming that the relevant conditions are 
	satisfied \footnote{See the conditions on the class
		$\mathcal{C}$ in Theorems \ref{finiteextensionthm}, \ref{directproducttheorem} 
		and \ref{freeproducttheorem} for finite extensions,
		direct products and free products, respectively.}, 
	the class of $\mathcal{C}$--Cayley polynomial--time 
	computable groups is closed under 
	taking a finite extension, the direct 
	product and the free product. The same 
	holds for the class of $\mathcal{C}$--Cayley polynomial--time computable groups 
	with quasigeodesic normal forms.  
\end{theorem}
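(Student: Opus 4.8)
The plan is to obtain all three closure properties by replaying, essentially verbatim, the constructions from Theorems \ref{finiteextensionthm}, \ref{directproducttheorem} and \ref{freeproducttheorem}, with the single systematic replacement of ``automatic / position--faithful linear--time'' by ``polynomial--time''. The membership of the constructed normal--form language in the class $\mathcal{C}$ is verified by exactly the same computations as in the linear--time proofs, since those arguments invoke only the closure hypotheses on $\mathcal{C}$, which are unchanged (hence the footnote pointing back to the same conditions). Thus there are only two things to check: first, that the multiplication functions built for the extension, the direct product and the free product are again polynomial--time; and second, that \emph{if} the input representations carry quasigeodesic normal form, \emph{then} so do the constructed ones.

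For polynomiality I would argue as follows. In each construction the new multiplication--by--$q$ function is assembled from the given factor (or subgroup) multiplication functions by a \emph{bounded} amount of work: in the finite--extension case one reads and restores a single coset symbol and then applies a fixed--length word $s_1 \cdots s_\ell$ of $H$--generators; in the direct--product case one applies a single factor function to a prefix (or leaves a suffix untouched); in the free--product case one either attaches $\psi_1^{-1}(q)$ or updates the terminal syllable by one factor function. Finite composition of polynomial--time functions is polynomial--time (this is the fact already used in Proposition \ref{ind_choice_gen_caylepol}), so it only remains to bound the tape bookkeeping. The one place where the linear--time proof genuinely used the bounded difference Lemma \ref{bounded_difference_lemma} is the direct--product step: after rewriting the prefix $\psi_1^{-1}(g_1)$ to $\psi_1^{-1}(g_1 q)$ the suffix $\psi_2^{-1}(g_2)$ must be shifted to close (or open) a gap. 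Here Lemma \ref{bounded_difference_lemma} no longer applies, but the length change of the prefix is bounded by the running time of $f_q^{G_1}$, hence by a polynomial in the input length $n$; shifting a block of length at most $n$ by a polynomial number of cells on a one--tape machine costs polynomial time, so the composite function is polynomial--time. The analogous remark disposes of the finite--extension ``move the coset symbol back to the front'' step and the free--product suffix update. I expect this direct--product shift to be the main (indeed the only) genuine obstacle, precisely because it is where the disappearance of the constant bound of Lemma \ref{bounded_difference_lemma} must be absorbed by the polynomial time budget.

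For preservation of quasigeodesic normal form I would use that in each construction the normal form of $G$ is a concatenation of factor normal forms, and that the word metric of $G$ decomposes compatibly. In the direct product with generating set $A \sqcup B$ one has $d_{A \cup B}(g_1 g_2) = d_A(g_1) + d_B(g_2)$, and the normal form of $g_1 g_2$ is $uv$ with $\psi_1(u)=g_1$, $\psi_2(v)=g_2$, so $|uv| = |u| + |v| \leqslant C_1(d_A(g_1)+1) + C_2(d_B(g_2)+1)$, which is at most $C(d_{A \cup B}(g_1 g_2)+1)$ for $C = 2\max\{C_1,C_2\}$. In the free product the word length over $A \sqcup B$ of a reduced element equals the sum of the word lengths of its syllables, the normal form is the concatenation of the syllables' normal forms, and each nontrivial syllable has word length at least $1$, so the per--syllable additive constants are absorbed and quasigeodesicity follows by the same estimate. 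For the finite extension $H \leqslant G$ I would invoke that a finite--index subgroup is quasi--isometrically embedded, so $d_A^H(h)$ and $d_{A'}^G(h)$ differ only by fixed multiplicative and additive constants; since the constructed normal form $wu$ lengthens $w$ by at most one symbol and $g = hk$ with $k$ ranging over a finite set, combining these bounds yields $|wu| \leqslant C'(d_{A'}^G(\psi'(wu))+1)$. Together with the polynomiality checks above, this gives both halves of the statement.
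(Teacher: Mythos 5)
Your proposal is correct and matches the paper's proof, which simply states that the constructions of Theorems \ref{finiteextensionthm}, \ref{directproducttheorem} and \ref{freeproducttheorem} go through ``with minor obvious changes'' and that the quasigeodesic property is preserved in each construction. You fill in exactly the details the paper leaves implicit --- in particular, correctly identifying that the direct--product shift is where the constant bound of Lemma \ref{bounded_difference_lemma} disappears and must be absorbed by the polynomial time budget, and supplying the metric estimates ($d_{A \cup B}$ additivity, syllable--length additivity in free products, quasi--isometric embedding of a finite--index subgroup) that justify the quasigeodesic half.
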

\begin{proof} 
   For the first statement of the theorem 
   the proof repeats Theorems \ref{finiteextensionthm}, 
   \ref{directproducttheorem} and \ref{freeproducttheorem}
   with minor obvious changes.
   Note that, though we use 
   Lemma \ref{bounded_difference_lemma} 
   in the proof of Theorem \ref{directproducttheorem}, 
   we do not need it for proving the analogous result  
   for $\mathcal{C}$--Cayley polynomial--time computable groups as shifting of a string on the tape
   by at most polynomial number 
   of cells requires at most polynomial time.  
   For the second statement 
   of the theorem it is enough to notice that 
   the quasigeodesic property is preserved 
   for all representations which appear in the proofs of these theorems.      
\end{proof}	
  
\begin{theorem}[Finitely generated subgroups]
	\label{fgsubgroups_caypoltime}	
	The class of Cayley 
	polynomial--time computable 
	groups is closed under taking a finitely generated 
	subgroup. The same holds for the class of 
	Cayley polynomial--time computable groups  
	with quasigeodesic normal forms. 
\end{theorem}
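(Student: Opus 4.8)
The plan is to follow the proof of Theorem~\ref{subgroupsofquasiCayleyarequasiCayley} almost verbatim, replacing the phrase ``automatic (p.f.\ linear--time)'' throughout by ``polynomial--time'', and then to verify separately that the quasigeodesic property is inherited by the subgroup representation. First I would take a \polyC\ group $G$ with a symmetric set of semigroup generators $S = A \cup A^{-1} = \{s_1, \dots, s_n\}$, a representation $\psi : L \rightarrow G$ with $L \subseteq \Sigma^*$, and polynomial--time computable functions $f_i : \Sigma^* \rightarrow \Sigma^*$ realising right multiplication by $s_i$, so that $f_i(L) \subseteq L$ and $\psi(f_i(w)) = \psi(w) s_i$. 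Given a finitely generated subgroup $H \leqslant G$ with symmetric semigroup generators $S' = A' \cup A'^{-1} = \{s_1', \dots, s_k'\}$, each $s_j'$ is a product $s_j' = s_{j_1} \dots s_{j_{m_j}}$ of elements of $S$. I set $L' = \psi^{-1}(H) \subseteq L$, let $\psi' = \psi|_{L'}$, and define $f_j' = f_{j_{m_j}} \circ \dots \circ f_{j_1}$.

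By Proposition~\ref{ind_choice_gen_caylepol} a composition of polynomial--time computable functions is again polynomial--time computable, so each $f_j'$ is polynomial--time computable; moreover $f_j'(L') \subseteq L'$ and $\psi'(f_j'(w)) = \psi'(w) s_j'$ for all $w \in L'$. Hence $\psi' : L' \rightarrow H$ is a \polyC\ representation, which proves the first statement. For the second statement I must check that a quasigeodesic $\psi$ yields a quasigeodesic $\psi'$, and this is the only place where some care is needed, since the ambient word metric changes when we pass from $(G,A)$ to $(H,A')$. Writing $\ell = \max_j m_j$, every geodesic word for an element $h \in H$ over $A' \cup A'^{-1}$ of length $d_{A'}(h)$ expands, upon replacing each $s_j'$ by $s_{j_1}\dots s_{j_{m_j}}$ (using that $S$ is symmetric to handle inverses), into a word over $A \cup A^{-1}$ of length at most $\ell\, d_{A'}(h)$; consequently $d_A(h) \leqslant \ell\, d_{A'}(h)$.

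If $\psi$ satisfies $|w| \leqslant C(d_A(\psi(w)) + 1)$, then for every $w \in L'$, with $h = \psi'(w) = \psi(w) \in H$, I obtain $|w| \leqslant C(d_A(h) + 1) \leqslant C(\ell\, d_{A'}(h) + 1) \leqslant C\ell\,(d_{A'}(\psi'(w)) + 1)$, so $\psi'$ is quasigeodesic with constant $C\ell$. I expect no serious obstacle here: the construction of $\psi'$ and its multiplication functions is identical to the p.f.\ linear--time case, the one genuinely new ingredient being the elementary comparison $d_A(h) \leqslant \ell\, d_{A'}(h)$ of word metrics, which is exactly what rescues the quasigeodesic bound under restriction.
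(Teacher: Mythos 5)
Your proposal is correct and takes essentially the same route as the paper: the first statement is proved by repeating the proof of Theorem~\ref{subgroupsofquasiCayleyarequasiCayley} with polynomial--time functions (closure under composition as in Proposition~\ref{ind_choice_gen_caylepol}), and the second by the word--metric comparison $d_A(h) \leqslant C' d_{A'}(h)$, which is exactly your constant $\ell$ and yields the same chain of inequalities $|w| \leqslant C(d_A(\psi(w))+1) \leqslant C(C' d_{A'}(\psi(w))+1)$ as in the paper.
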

\begin{proof}
   For the first statement of the theorem 
   the proof repeats Theorem \ref{subgroupsofquasiCayleyarequasiCayley}
   with obvious changes. 
   In order to show the second statement of the theorem, 
   in the proof of 
   Theorem \ref{subgroupsofquasiCayleyarequasiCayley} 
   it is enough to notice that if 
   $\psi: L \rightarrow G$ has 
   quasigeodesic normal form, 
   then for each $w \in L' \subset L$ the inequalities
   $|w| \leqslant C (d_A (\psi(w)) + 1) 
   \leqslant C (C' d_{A'}(\psi(w)) +1)$ hold
   for some constants $C,C'>0$. This implies that 
   $\psi': L' \rightarrow H$, which is the 
   restriction of $\psi$ onto $L'$, 
   also has quasigeodesic normal form.
\end{proof}

	We note that Theorem 
	\ref{CquasiCayley_are_Cgraphautomatic} 
	cannot be directly generalized to  
	$\mathcal{C}$--Cayley polynomial--time 
	computable groups, so in 
	general we cannot say how they compare with  
	$\mathcal{C}$--graph automatic 
	groups. 
	However, for some special classes of languages 
	$\mathcal{C}$, it is possible to relate 
	 these two classes  
	of groups
	as we explain in the following remark.
	
	\begin{remark}
		\label{Sk-graph_automatic_remark}
	In \cite[Theorem~10]{ElderTabackCgraph} the second 
	author and Taback showed that 
	for a $\mathscr{S}_k$--graph automatic group with 
    quasigeodesic normal form, 
	the normal form is computable in polynomial time, 
	where $\mathscr{S}_k$ is the class of languages 
	accepted by a non--blind non--deterministic 
	$k$--counter automaton running in quasi--realtime\footnote{We recall that 
	a non--blind non--deterministic 
	$k$--counter automaton
	 is 
	a non--deterministic automaton augmented with 
	$k$ integer counters which are initially set to zero 
	\cite{BookGinsburg72}. 
	These counters can be incremented, decremented, set to 
	zero and compared to zero. Running in quasi--realtime 
	means that the number of allowed  
	consecutive $\epsilon$--transitions is bounded from
	above by some constant.
	A string is accepted by this automaton exactly if it reaches an accepting state with all counters returned to zero.}.      
	Now let $G$ be a $\mathscr{S}_k$--graph automatic 
	group with a symmetric set of semigroup generators 
	$S = \{s_1, \dots, s_n \} \subset G$. 
	Then there is a bijection 
	$\psi : L \rightarrow G$ from a language 
	$L \subseteq \Sigma^*$ in the class $\mathscr{S}_k$ 
	to $G$ for which 
	the languages $L_s = \{u \otimes v \,|\, 
	u,v \in L,  \psi (v) = \psi(u) s \}$  
	are in $\mathscr{S}_k$ for every $s \in S$. 
	Assume that there is a polynomial 
	$p$ such that for all $s \in S$ and $u,v \in L$,
	for which $\psi(v) = \psi(u)s$, the 
	following inequality holds:  
	\begin{equation}
	\label{remark_Sk_inequality}  
	  |v| \leqslant p (|u|).
	\end{equation} 
	Then $G$ is $\mathscr{S}_k$--Cayley polynomial--time computable and $\psi : L \rightarrow G$  
	is a $\mathscr{S}_k$--Cayley polynomial--time computable 
	representation of $G$. 
	In order to prove this
	one only needs to show that for a given 
	$s \in S$ there is a polynomial--time 
	algorithm which for the input $u \in L$
	produces the output $v \in L$ such that  
	$u \otimes v \in L_s$. 
	The reader may look up this algorithm 
	and the explanation why it runs in 
	polynomial time in \cite[Theorem~10]{ElderTabackCgraph}.
     We note that 
    \eqref{remark_Sk_inequality} necessarily holds if
    $\psi : L \rightarrow G$ is a Cayley polynomial--time 
    computable representation.
\end{remark}

What are examples of \polyC\ groups? Especially we 
are interested in examples of 
$\REG$--\polyC\ groups because,
similarly to \linearC\ groups, 
this class naturally extends the class of 
Cayley automatic groups.

First, in order to show that the class of
$\REG$--\polyC\ groups is wide,
we show that it comprises all f.g. nilpotent groups.    
Let $G$ be a f.g. nilpotent group. 
Suppose first that $G$ is torsion--free.  
There is a central series 
$G = G_1 \geqslant  
\dots \geqslant G_{n+1} = 1$ such that  
$G_i / G_{i+1}$ is an infinite cyclic group for 
all $i =1,\dots,n$. Then there exist  
$a_1,\dots,a_n \in G$ for which
$G_i = \langle a_i , G_{i+1} \rangle$. 
This implies that every element $g \in G$ 
has a unique normal form 
$g = a_1 ^{x_1} \dots a_n ^{x_n}$, where  
$x_1,\dots,x_n$  are integers. 
Let $L$ be a language of such normal forms 
over the alphabet 
$\Sigma = \{a_1,\dots,a_n,a_1 ^{-1},\dots,a_n ^{-1}\}$. 
Clearly, $L$ is a regular language. 
The canonical mapping $\pi : L \rightarrow G$ 
gives a bijection between $L$ and $G$.   
For given two group elements  
$g_1 = a_1 ^{x_1} \dots a_n ^{x_n}$ 
and $g_2= a_1 ^{y_1} \dots a_n ^{y_n}$, the product  
$g_1 g_2$ equals $a_1 ^{q_1} \dots a_n ^{q_n}$ 
for some integers $q_1, \dots q_n$. This defines the 
functions $q_i (x_1,\dots,x_n, y_1,\dots,y_n)$, 
$i=1,\dots,n$ of $2n$ integer variables
$x_1,\dots,x_n,y_1, \dots, y_n$; in fact 
it can be shown that $q_i$ depends only on $x_1,\dots,x_i$ and $y_1,\dots,y_i$ for every $i=1,\dots,n$. 
Hall showed \cite{Hall57} that the functions 
$q_i$ are polynomials 
$q_i \in \mathbb{Q}[x_1,\dots, x_n, y_1,\dots, y_n]$.
Therefore, for each semigroup generator 
$s \in \{a_1,\dots, a_n, a_1^{-1},\dots, a_n^{-1} \}$ 
there exist polynomials 
$p_{s,i} \in \mathbb{Q}[x_1,\dots,x_n]$ 
for $i=1,\dots,n$ such that 
$a_1 ^{x_1} \dots a_n ^{x_n} s = 
a_1 ^{p_{s,1}}\dots a_n ^{p_{s,n}}$.  
It can be seen that these right multiplications are 
polynomial--time computable functions.  
Therefore, $\pi : L  \rightarrow G$
is a $\REG$--Cayley polynomial--time computable 
representation.
If the group $G$ is not torsion--free, it  
has a torsion--free nilpotent subgroup of 
finite index. Therefore, by Theorem \ref{Finite_extension_DP_FP_CPTC_thm}, 
$G$ also has a $\REG$--Cayley polynomial--time computable 
representation. 


Other nontrivial examples of 
$\REG$--\polyC\ groups include 
the wreath product $\mathbb{Z}_2 \wr \mathbb{Z}^2$ 
and Thompson's group $F$. 
We denote by $\mathsf{IND}$ the class of indexed 
languages\footnote{We recall that indexed languages
are languages recognized by 
\href{https://en.wikipedia.org/wiki/Nested_stack_automaton}
{nested stack automata}.}. 
In \cite[\S~5]{BK15}  the first author and 
Khoussainov showed that 
the group $\mathbb{Z}_2 \wr \mathbb{Z}^2$ is 
$(\REG,\mathsf{IND})$--graph 
automatic\footnote{
	$(\mathcal{B,C})$--graph automatic is defined in \cite{ElderTabackCgraph}: the normal form is in the language class $\mathcal B$ and the 2-tape language for multiplication is in the class $\mathcal C$. }
by constructing  
a certain bijection between a regular language and 
the group $\mathbb{Z}_2 \wr \mathbb{Z}^2$.  
It can be verified that this bijection  
is a \REG--Cayley polynomial--time computable 
representation of  $\mathbb{Z}_2 \wr \mathbb{Z}^2$. 
Therefore,  $\mathbb{Z}_2 \wr \mathbb{Z}^2$ 
is a \REG--Cayley polynomial--time computable 
group. This representation does not have 
quasigeodesic normal form, see \cite[Remark~9]{BK15}.

Let $\mathsf{D}\mathscr{S}_1$ be the class of 
non--blind deterministic $1$--counter languages, 
see the footnote in  Remark \ref{Sk-graph_automatic_remark} where the definition of non--blind 
non--deterministic $k$--counter languages is recalled. 
In \cite{ElderTaback15_Thompson} the second author 
and Taback showed that Thompson's group $F$ is 
$\left(\REG,\mathsf{D}\mathscr{S}_1 \right)$--graph 
automatic. Moreover, for their representation  the inequalities 
of the form 
$\lambda |w| - \lambda_0 \leqslant l (g) 
 \leqslant \mu |w| + \mu_0$ hold for all $g \in F$, 
 where $l(g)$ is the length of a geodesic word representing 
 $g$, $w$ is the normal form corresponding to $g$ and 
 $\lambda,\mu, \lambda_0, \mu_0 >0$, 
 see \cite[Proposition~3.3]{ElderTaback15_Thompson}. 
 These inequalities imply that an inequality 
 of the form \eqref{remark_Sk_inequality} holds 
 for some linear function $p$. 
 By the observation made in the end of 
 Remark \ref{Sk-graph_automatic_remark},
 we obtain that $F$ is $\REG$--Cayley polynomial--time 
 computable. 
 Moreover, the inequality  
 $\lambda |w| - \lambda_0 \leqslant l (g)$ implies that 
 $F$ is $\REG$--Cayley polynomial--time 
 computable with quasigeodesic normal form.


For the last example of a 
\polyC\ group we mention the wreath 
product $\mathbb{Z}_2 \wr \mathbb{F}_2$. 
We denote by $\DCFL$ the class of deterministic context--free languages.
In \cite[\S~4]{BK15}
it was shown that the group $\mathbb{Z}_2 \wr \mathbb{F}_2$ 
is $\DCFL$--graph automatic
by constructing a certain bijection between 
a $\DCFL$
language and the group $\mathbb{Z}_2 \wr \mathbb{F}_2$. It can be verified 
that this bijection is a $\DCFL$--Cayley 
polynomial--time computable representation of 
$\mathbb{Z}_2 \wr \mathbb{F}_2$. 
Moreover, for this representation the 
inequalities 
of the form
$\lambda |w| - \lambda_0 \leqslant l (g) 
\leqslant \mu |w| + \mu_0$ hold for all 
$g \in \mathbb{Z}_2 \wr \mathbb{F}_2$,
see \cite[Theorem~5]{BK15}.  
The inequality $\lambda |w| - \lambda_0 \leqslant l (g)$ 
implies that $\mathbb{Z}_2 \wr \mathbb{F}_2$ is 
$\DCFL$--Cayley polynomial--time 
computable with quasigeodesic 
normal form.



\section{\distfun\ for \polyC\ groups}
\label{characterization_section}

 Let $G$ be a f.g. group with 
 a finite generating set $A \subset G$.  
 Let $\psi: L \rightarrow G$ be a bijection
from a language $L \subseteq \Sigma^*$ 
to the group $G$.
For each symbol $\sigma \in \Sigma$ 
one can assign a group element $g_\sigma \in G$. 
This assignment defines a mapping 
$\alpha : \Sigma \rightarrow G$, 
not necessarily injective, for which 
$\alpha (\sigma ) = g_\sigma$ for all 
$\sigma \in \Sigma$. 
Then we can define the canonical mapping 
$\pi_\alpha : L \rightarrow G$ as follows: 
for a given string
$w  = \sigma_1 \dots \sigma_k \in L$ we define 
$\pi_\alpha(w) \in G$ as $\pi_\alpha (w) = \alpha(\sigma_1) \alpha(\sigma_2) \dots \alpha(\sigma_k)$
and $\pi_\alpha(w)=e$ if $w= \epsilon$. Thus, for 
fixed $\psi: L \rightarrow G$ and 
$\alpha: \Sigma \rightarrow G$, the following 
nondecreasing function 
$h_{\psi,\alpha}: \left[N, + \infty \right) 
\rightarrow \mathbb{R}^+$ is defined by:  
\begin{equation} 
\label{h_function_def}   
h_{\psi,\alpha}(n) = \max\{d_A (\pi_\alpha(w),\psi(w))\,|\,
w \in L^{\leqslant n} \},  
\end{equation} 
where $d_A (\pi_\alpha(w),\psi(w))$ is the distance 
between $\pi_\alpha (w)$ and $\psi(w)$ in the word
metric relative to $A$, which is the length of a 
geodesic word representing 
$\pi_\alpha(w)^{-1} \psi(w)$, i.e.,  
$d_A (\pi_\alpha(w),\psi(w))=  
d_A (\pi_\alpha(w)^{-1} \psi(w))$, 
$L^{\leqslant n} = \{w \in L \, | \, 
|w| \leqslant n\}$ and 
$N = \min \{n \in \mathbb{N} \, | \,
L^{\leqslant n} \neq \varnothing\}$.  
For given $\psi$ and $\alpha$ we call 
$h_{\psi,\alpha}$ a {\em \distfun}.     
This function 
was introduced  in 
\cite{BT_LATA18} and studied in \cite{eastwest19,BET19} in the context of Cayley automatic groups\footnote{In \cite{BT_LATA18,eastwest19} 
	it is assumed from 
	the beginning that $L$ is a language over some 
	symmetric set of generators. 
	It can be seen that this assumption is purely 
	a matter of convenience and it does not 
	have any effect on the study of 
	the \distfun\ $h_{\psi,\alpha}$.}.

 Clearly, if $G$ is automatic, 
 then for an automatic representation 
 $\pi: L \to G$, $L \subset (A \cup A^{-1})^*$ 
 and a natural mapping $\alpha: A \cup A^{-1} 
 \rightarrow G$ for which $\alpha (s)=s$, 
 $s \in A \cup A^{-1}$  
 all values of the Cayley 
 distance function $h_{\pi,\alpha}$
 are equal to zero. 
 \cite[Theorem~8]{BT_LATA18} shows that
 if a group $G$ has  some Cayley automatic representation 
 $\psi : L \rightarrow G$ and mapping  
 $\alpha : \Sigma \rightarrow G$ for which the 
 \distfun\ $h_{\psi,\alpha}$ is bounded from above by a constant, 
 then $G$ must be automatic.
In \cite{BET19} the first two authors and Taback ask: can the \distfun\ become arbitrarily close to a constant function for some non-automatic Cayley automatic group?
Here we show that the answer is no when we generalise to \linearC\ and $\REG$--\polyC\ representations: 
we furnish examples which have  \linearC\ and $\REG$--\polyC\  representations 
for which the \distfun\ is zero.

 For given two nondecreasing functions 
 $h_1 : \left[N_1,+ \infty \right) \rightarrow \mathbb{R}^{+}$ 
 and $h_2 : \left[N_2, +\infty \right) \rightarrow \mathbb{R}^{+}$
 we say that $h_1 \preceq h_2$, if
 there exist integer constants 
 $K,M>0$ and 
 $N \geqslant \max \{N_1,N_2 \}$ such that 
 $h_1 (n) \leqslant K h_2 (Mn)$ for all 
 $n \geqslant N$. It is said that  
 $h_1 \asymp h_2$ if $h_1 \preceq h_2$ and 
 $h_2 \preceq h_1$. 
 We say that a Cayley automatic group $G$ is separated 
 from automatic groups if there exists a 
 non--decreasing unbounded function $f$ such that 
 $f \preceq h_{\psi,\alpha}$ for all Cayley automatic 
 representations $\psi : L \rightarrow G$ and 
 mappings $\alpha: \Sigma \rightarrow G$.  
 Let $\bold{z} : \mathbb{N} 
 \rightarrow \mathbb{R}^{+}$ be the zero function: 
 $\bold{z} (n) = 0$ for all $n \in \mathbb{N}$. 
 We say that a \distfun\ $h_{\psi,\alpha}$ 
 vanishes if $h_{\psi,\alpha} \asymp \bold{z}$: 
 this equivalently means that 
 $h_{\psi,\alpha}(n) = 0$ for 
 all $n \in \mathrm{dom} \,h_{\psi,\alpha}$.  
 We denote by $\mathfrak{i}: \mathbb{N} \rightarrow 
 \mathbb{R}^+$ the identity function: 
 $\mathfrak{i}(n) = n$ for all $n \in \mathbb{N}$. 
 \begin{theorem} 
 \label{lamplighter_theorem}	
 There exists a Cayley automatic 
 group $G$ separated from automatic groups but  
 for which the \distfun\ $h_{\psi,\alpha}$ 
 vanishes
 for some 
 \linearC\ representation $\psi$ 
 of $G$ and mapping $\alpha$.      
 \end{theorem}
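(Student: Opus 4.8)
The plan is to take $G=\mathbb{Z}_2\wr\mathbb{Z}$, the lamplighter group, with the standard generators $a$ (which toggles the lamp at the current position, so $a^2=e$) and $t,t^{-1}$ (which shift the position). An element is a pair $(c,p)$, where $c\colon\mathbb{Z}\to\mathbb{Z}_2$ has finite support and $p\in\mathbb{Z}$. This group is Cayley automatic but not automatic, and it is separated from automatic groups by the results of \cite{eastwest19,BET19}; this already supplies the first half of the statement. For the second half I would use the observation that $h_{\psi,\alpha}\asymp\bold{z}$ exactly when $\pi_\alpha(w)=\psi(w)$ for every $w\in L$, i.e.\ exactly when the representation $\psi$ is itself the evaluation map $\pi_\alpha$ of some assignment $\alpha\colon\Sigma\to G$. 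So it suffices to exhibit a \linearC\ representation of $G$ that is literally an evaluation map, and the whole problem reduces to writing down a canonical \emph{word} in the generators for each group element whose induced multiplications are automatic.

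To this end I would fix, for each $(c,p)$, the interval $[\ell,r]$ with $\ell=\min(\mathrm{supp}(c)\cup\{0,p\})$ and $r=\max(\mathrm{supp}(c)\cup\{0,p\})$, and use the ``sweep--and--return'' word: move left from $0$ to $\ell$ (no toggles), sweep right from $\ell$ to $r$ toggling each lit lamp exactly once, then move left from $r$ back to $p$. Reading this word as a product of generators evaluates to $(c,p)$, so with $\alpha(a)=a$, $\alpha(t)=t$, $\alpha(t^{-1})=t^{-1}$ we get $\pi_\alpha=\psi$ and the \distfun\ vanishes. The one extra ingredient is an invisible marker symbol $\#$ with $\alpha(\#)=e$, inserted at the slot of the rightward sweep corresponding to position $p$; since $\alpha(\#)=e$ this does not change $\pi_\alpha$. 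Let $L$ be the language of all such marked words. One checks that $\pi_\alpha\colon L\to G$ is a bijection, and that $L$ is not regular (the marker's position is tied by a counting condition to the lengths of the return and leftward segments), so $\psi$ is genuinely not a Cayley automatic representation, consistently with $G$ being separated from automatic groups.

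It then remains to verify that right multiplication by each generator is computed by a position--faithful one--tape Turing machine in linear time. Multiplication by $a$ flips the toggle recorded at the marker $\#$; because $p$ and $0$ always lie in the anchor set, the interval $[\ell,r]$ is unchanged, so this is a single bounded edit at the (easily located) marker. Multiplication by $t$ (resp.\ $t^{-1}$) shifts the marker one slot to the right (resp.\ left) and shortens (resp.\ lengthens) the return segment by one symbol at the end of the tape; the cases where $p$ is an endpoint of $[\ell,r]$ are handled as bounded edits at the detectable ends of the word. Each of these is a constant number of bounded--neighbourhood rewrites around detectable landmarks together with a truncation or extension at an end of the tape, and a single pass realises it in linear time; that $\psi$ is a \linearC\ representation is then immediate, and by Theorem~\ref{quasigeodesicnormalformthm} the resulting normal form is automatically quasigeodesic (a useful consistency check).

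The main obstacle I anticipate is exactly the simultaneous automaticity of all three multiplications for an evaluation--map normal form. The tension is structural: toggling needs the current position to be flagged so that the machine can reach the relevant interior lamp, whereas the moves must both relocate that flag and adjust the (unbounded) return leg, whose length is coupled to the flag's interior location. In an ordinary generator word this coupling forces a reach--back by a distance determined by a far--away suffix, which a position--faithful one--tape machine cannot perform in linear time (cf.\ the $O(|u|^2)$ example in Section~\ref{quasi-Cayley-automatic-section}). The device that breaks the impasse is the identity--valued marker $\#$: it lets the machine locate the interior slot in linear time and then perform only bounded local edits, so no unbounded reach--back is ever needed. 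Verifying the boundary cases and the non--regularity of $L$ is routine once this idea is in place.
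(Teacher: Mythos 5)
Your proposal is correct and takes essentially the same route as the paper: there too $G=\mathbb{Z}_2\wr\mathbb{Z}$ (separated from automatic groups by \cite[Theorem~13]{BT_LATA18}), each $(f,z)$ is encoded by a leftward move, a rightward lamp--toggling sweep containing an identity--valued marker ($\uparrow$) at the lamplighter's position, and a return leg whose length is tied to the post--marker shifts by precisely your counting condition, and $\alpha$ sends the generators to themselves and the marker/delimiters to $e$, so $\pi_\alpha=\psi$ and the \distfun\ vanishes. The differences are cosmetic: the paper anchors the sweep interval at $\{z\}\cup\mathrm{supp}(f)$ rather than also including $0$, separates the three segments with $\#$ delimiters, explicitly places the normal--form language in the class of blind deterministic one--counter languages rather than merely noting non--regularity, and, like you, leaves the routine verification that the three multiplication functions are automatic to the reader.
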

 \begin{proof} 
 	 In order to prove the theorem one needs to 
 	 provide an example of a group $G$ 
 	 satisfying the condition of the theorem. 
 	 For such an example we take the 
 	 lamplighter group $G=\mathbb{Z}_2 \wr \mathbb{Z}$.  
 	 The lamplighter group 
 	 $\mathbb{Z}_2 \wr \mathbb{Z}$ is Cayley automatic 
 	 \cite{KKM11}; but not automatic 
 	 because it is not finitely presented \cite{Epsteinbook}. 
 	 By \cite[Theorem~13]{BT_LATA18}, 
 	 for a Cayley automatic representation 
 	 $\psi : L \rightarrow \mathbb{Z}_2 \wr 
 	 \mathbb{Z}$ and a mapping 
 	 $\alpha : \Sigma \rightarrow 
 	 \mathbb{Z}_2 \wr \mathbb{Z}$ the 
 	 corresponding function $h_{\psi,\alpha}$ given by 
 	 \eqref{h_function_def} is coarsely greater 
 	 or equal than $\mathfrak{i}$: 
 	 $\mathfrak{i} \preceq h_{\psi,\alpha}$. 
 	 Below we will show that there exist 
 	 a \linearC\ representation of 
 	 $\mathbb{Z}_2 \wr \mathbb{Z}$ and 
 	 mapping for which the \distfun\ vanishes. 
 	 
 	 Each element of the lamplighter group 
 	 $\mathbb{Z}_2 \wr \mathbb{Z}$ 
 	 is identified with a pair $(f,z)$, where 
 	 $f$ is a function 
 	 $f: \mathbb{Z} \rightarrow \{0,1\}$ with 
 	 finite support (that is, only for finitely many 
 	 integers $i$, $f(i) = 1$) and 
 	 $z$ is an integer. 
 	 We denote by $a$ the pair $(f_0,1)$, 
 	 where $f_0 (j) = 0$ for all 
 	 $j \in \mathbb{Z}$, and by $b$ the 
 	 pair $(f_1,0)$, where $f_1(j) = 0$ for 
 	 all $j \neq 0$ and $f_1(0)=1$.   
 	 The group elements $a$ and $b$ 
 	 generate  
 	 $\mathbb{Z}_2 \wr \mathbb{Z}$ and the right 
 	 multiplications by $a,a^{-1}$ and $b=b^{-1}$ 
 	 are as follows.
 	 For a given $g = (f,z) \in \mathbb{Z}_2 \wr \mathbb{Z}$, 
 	 $ga = (f, z+1)$, $ga^{-1} = (f,z-1)$ and  
 	 $g b = gb^{-1} =  (f',z)$, where $f' (i) = f(i)$ for 
 	 $i \neq z$, $f'(z) = 0 $ if 
 	 $f(z) = 1$ and $f'(z) = 1 $ if  
 	 $f(z) = 0$. 
 	 The identity $e$ of $\mathbb{Z}_2 \wr \mathbb{Z}$ 
 	 corresponds to the pair $(f_0,0)$.     
    \begin{algorithm}
	\caption{An algorithm for computing the 
		substring $u$ of $w$} 	 
	\label{alg_comput_u}
	\begin{algorithmic}[1]	
		\Procedure{Substring}{$f,z,\ell,r$}
		\State $i \leftarrow \ell$; 
		$u \leftarrow \epsilon$  	
		\While{$i \leqslant r $}  
		\If {$i>\ell$} $u \leftarrow ua$
		\EndIf
		\If{$f(i) = 1$} {$u \leftarrow u b$}
		\EndIf
		\If {$i = z$} {$u \leftarrow u\uparrow$}
		\EndIf 		
		\State $i \leftarrow i + 1$
		\EndWhile
		\State \Return $u$
		\EndProcedure	
	\end{algorithmic}
\end{algorithm}

 	 Let $g = (f,z)$ be a given group 
 	 element of $\mathbb{Z}_2 \wr \mathbb{Z}$. 
 	 Let $M = \max \{i \, | \, f(i)=1, i \in \mathbb{Z}\}$  
 	 and $m = \min \{i \, | \, f(i) = 1, 
 	 i \in \mathbb{Z} \}$.
 	 We define $r = \max \{z,M\}$  
 	 and $\ell = \min \{z, m\}$.   
 	 Let $\Sigma  = \{b,a,a^{-1},\uparrow,\#\}$. 
 	 We define a normal form $w \in \Sigma^*$ of 
 	 $g$ to be the string $w = a^{\ell} \# u \# a^{z-r}$, 
 	 where the substring $u$ is 
 	 computed by Algorithm \ref{alg_comput_u} for a given pair $(f,z)$.   
 	 Informally speaking, the normal form 
 	 $w = a^{\ell} \# u \# a^{z-r}$ 
 	 is obtained as follows. First the pointer
 	 moves from the position $i=0$ to the position $i=\ell$.
 	 After that the pointer moves to the right 
 	 scanning the values $f(i)$ 
 	 and the position of the lamplighter 
 	 $z$ until it reaches the position $i = r$. 
 	 Then the pointer moves to the left 
 	 until it reaches the position of the lamplighter $i=z$.  
 	 Let us give two examples. Let $g_1 = (f,z)$ be the pair 
 	 for which $z=1$, $f(-1)=1$, $f(0)=1$, $f(2) = 1$
 	 and $f(i)=0$ for all $i \neq -1,0,2$. The normal 
 	 form of $g_1$ is $a^{-1}\#baba\uparrow ab\#a^{-1}$.  
 	 Let $g_2 = (f,z)$ be the pair 
 	 for which $z=1$, $f(-2)=1$
 	 and $f(i)=0$ for all $i \neq -2$. The normal 
 	 form of $g_2$ is $a^{-1}a^{-1}\#baaa\uparrow\#$.
 	 	 
 	 Let $L$ be the language of all such normal forms. 
 	 We denote by $\mathscr{C}_1$ the class of
 	 languages recognized by 
 	 a (quasi--realtime) blind deterministic $1$--counter 
 	 automaton\footnote{We recall that a blind deterministic 
 	 	$1$--counter automaton is a finite automaton 
 	 	augmented by an integer counter, initially 
 	 	set to zero, which can be incremented and decremented, 
 	 	but not read. A string is accepted by such an automaton
 	 	exactly if it reaches an accepting state with the counter
 	 	returned to zero.}.
 	 It follows from the simple argument below that 
 	 the language $L$ is in the class $\mathscr{C}_1$.    
 	 Let $w = a^{\ell} \# u \# a^{z-r}$.  
 	 The substring $u$ is of the form $u = p \uparrow s$, where
 	 $p$ is the prefix of $u$ preceding the symbol 
 	 $\uparrow$ and $s$ is the suffix of $u$ following 
 	 the symbol $\uparrow$. 
 	 The counter is increased by one each time the automaton 
 	 reads the symbol $a$ in the suffix $s$. The counter 
 	 is decreased by one each time the automaton reads 
 	 the symbol $a^{-1}$ in the suffix $a^{z-r}$ of $w$ following the second symbol $\#$. 
 	 Then $w \in L$ if and only if the counter returns to 
 	 $0$.       
 	 
     
     Construction of automatic functions recognizing the right multiplications by $b$ and $a,a^{-1}$ 
     is easy. The right multiplications by 
     $a,a^{-1}$ 
     require verification of 
     cases when $\ell$ or $r$ change. 
     Namely, for the right multiplication by $a$, $\ell$ is increased by $1$ if  
     $z = \ell$ and $f(\ell)=0$ and 
     $r$ is increased by $1$ if 
     $z=r$. For the right multiplication by 
     $a^{-1}$, $\ell$ is decreased by $1$ 
     if $z= \ell$ and $r$ is decreased by $1$ 
     if $z=r$ and $f(r)=0$.  
     All these cases can be verified by 
     a finite automaton.

 	 Thus, we constructed a 
 	 $\mathscr{C}_1$--\linearC\ representation 
 	 $\psi : L \rightarrow \mathbb{Z}_2 \wr \mathbb{Z}$ 
 	 of the lamplighter 
 	 group $\mathbb{Z}_2 \wr \mathbb{Z}$ which sends
 	 a normal form $w = a^{\ell} \# u \# a^{z-r}$
 	 to the corresponding group element 
 	 $g=(f,z)$. 
 	 Now let $\alpha: \Sigma \rightarrow \mathbb{Z}_2 \wr \mathbb{Z}$ be the following 
 	 mapping: $\alpha (a)= a$, $\alpha (a^{-1})= a^{-1}$, 
 	 $\alpha (b) = b$, $\alpha (\uparrow) = e$ and
 	 $\alpha (\#) = e$. Clearly, for the 
 	 $\mathscr{C}_1$--\linearC\ representation $\psi$ and 
 	 the mapping $\alpha$, the \distfun\
 	 $h_{\psi,\alpha}$ 
 	 vanishes.    
 \end{proof}	

 \begin{theorem} 
 \label{baumslagthm}	
 	There exist a Cayley automatic group $G$ separated 
 	from automatic groups but for which the Cayley 
 	distance function $h_{\psi,\alpha}$ vanishes 
 	for some $\REG$--\polyC\ representation $\psi$ 
 	of $G$ and mapping $\alpha$. 
 \end{theorem}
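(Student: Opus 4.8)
The plan is to take for $G$ the solvable Baumslag--Solitar group $BS(1,2)=\langle a,t\mid tat^{-1}=a^{2}\rangle$, identified with the semidirect product $\mathbb{Z}[1/2]\rtimes\mathbb{Z}$ in which $t$ acts by multiplication by $2$; an element is a pair $(r,z)$ with $r\in\mathbb{Z}[1/2]$ and $z\in\mathbb{Z}$, and $a=(1,0)$, $t=(0,1)$. This group is Cayley automatic by \cite{dlt14}, and I would cite the separation result for $BS(1,2)$ (the analogue for this group of \cite[Theorem~13]{BT_LATA18}, established in \cite{eastwest19}) to obtain that it is separated from automatic groups. Note that, unlike the lamplighter of Theorem~\ref{lamplighter_theorem}, $BS(1,2)$ is finitely presented and admits a regular combing; this is exactly what lets us replace the $1$--counter language by a regular one, at the cost of passing from linear to polynomial time.

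First I would exhibit an explicit regular language $L$ of words over $S=\{a^{\pm1},t^{\pm1}\}$ that biject with $G$ and evaluate to it. For $(r,z)$ with $r=0$ take $w=t^{z}$; for $r\neq0$, writing $|r|=\sum_{i=\ell}^{p}b_i2^{i}$ in binary with $b_\ell=b_p=1$ and $\sigma=\mathrm{sign}(r)$, take
$$ w=t^{\ell}\,d_\ell\,t\,d_{\ell+1}\,t\cdots t\,d_p\,t^{\,z-p},\qquad d_i=(a^{\sigma})^{b_i}, $$
a ``walk'' that descends to the lowest occupied level, moves up recording the binary digits of $r$ as occurrences of $a^{\sigma}$, and finally moves to the cursor level $z$. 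A direct computation in $\mathbb{Z}[1/2]\rtimes\mathbb{Z}$ gives $\pi(w)=(r,z)$, and the canonical parsing (leading and trailing maximal $t^{\pm}$--runs, with the digit block delimited by the first and last $a^{\pm}$) shows that $L$ is regular and that $\pi|_{L}:L\to G$ is a bijection. Setting $\psi=\pi|_{L}$ and letting $\alpha:S\to G$ be the inclusion $\alpha(s)=s$, we get $\pi_\alpha(w)=\pi(w)=\psi(w)$ for every $w\in L$, so $d_A(\pi_\alpha(w),\psi(w))=0$ and the \distfun\ $h_{\psi,\alpha}$ vanishes.

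It then remains to verify that right multiplication by each generator is polynomial--time computable and preserves $L$. Multiplication by $t^{\pm1}$ only moves the cursor, $(r,z)\mapsto(r,z\pm1)$, which amounts to adjusting the trailing $t$--run and is computable in linear time. The substantive case is multiplication by $a^{\pm1}$, which sends $(r,z)$ to $(r\pm2^{z},z)$: on the normal form this means adding or subtracting a unit at the digit level $z$ fixed by the cursor, carrying out binary addition with carry/borrow propagation toward the more significant digits, and then re--canonicalising (extending or contracting the digit block, fixing the sign, and recomputing the leading and trailing $t$--runs). I would show this is performed by a deterministic one--tape Turing machine in polynomially --- in fact quadratically --- many steps.

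The main obstacle is precisely this last step. Because $\pi_\alpha=\psi$ forces the normal form to be a genuine word in the generators, a binary digit equal to $0$ is encoded by the \emph{absence} of an $a^{\sigma}$ rather than by a fixed--width symbol, so a carry that flips a run of digits changes the length of the word and forces its suffix to be shifted by a variable distance; on a one--tape machine such a shift costs quadratic time. Hence $f_{a^{\pm1}}$ is genuinely super--linear, which is consistent with (indeed forced by) the fact that $BS(1,2)$ is not automatic: a linear--time position--faithful one--tape implementation would make $\psi$ a Cayley automatic representation with vanishing distance function and, by \cite[Theorem~8]{BT_LATA18}, would make $BS(1,2)$ automatic. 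Carefully bookkeeping the carry propagation, the sign handling in $\mathbb{Z}[1/2]$, and the re--canonicalisation into $L$ while keeping the total running time polynomial is the technical heart of the argument. Granting this, $\psi:L\to G$ is a $\REG$--\polyC\ representation of $BS(1,2)$ whose \distfun\ vanishes, which together with the cited separation result proves the theorem.
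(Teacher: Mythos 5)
Your proposal is correct and reaches the theorem by the same strategy at the top level --- pick a non--automatic Baumslag--Solitar group, quote the separation result from \cite{eastwest19}, and build a canonical ($\psi=\pi|_L$, $\alpha=$ inclusion) regular-language representation so that $h_{\psi,\alpha}$ vanishes by construction --- but with a genuinely different normal form. The paper works with all $BS(p,q)$, $1\leqslant p<q$, and uses the Britton/HNN normal form $w_\ell t^{\varepsilon_\ell}\cdots w_1 t^{\varepsilon_1}a^k$ with the tail $a^k$ in \emph{unary}: there multiplication by $a^{\pm 1}$ is a single-letter edit at the end of the tape, and multiplication by $t^{\pm 1}$ reduces to unary division with remainder by $q$ (or $p$) and rewriting $a^k$ as $a^rta^{mp}$, operations that are transparently polynomial on a one--tape machine; the price is a badly non-quasigeodesic normal form (the remark following the theorem, via \cite{BurilloElder14}). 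Your binary ``walk'' normal form for $BS(1,2)\cong\mathbb{Z}[1/2]\rtimes\mathbb{Z}$ flips the difficulty: $t^{\pm 1}$ becomes a trivial edit of the trailing run, while $a^{\pm 1}$ requires carry/borrow propagation in signed-magnitude binary, digit-block extension/contraction, and variable-length suffix shifts. You correctly identify this as the technical heart and only sketch it, but that matches the paper's own level of rigour (it likewise asserts ``it can be seen'' for its multiplier machines), and the sketch is sound: each restructuring is a bounded number of suffix shifts and scans, hence quadratic on a one--tape machine. Your route buys something the paper's does not: your normal form is quasigeodesic (it is essentially the standard travel-and-read-binary combing), so it would witness the stronger class of $\REG$--Cayley polynomial--time representations \emph{with quasigeodesic normal form} and vanishing \distfun, whereas the paper's representation provably lacks quasigeodesity; your observation that $f_{a^{\pm 1}}$ is forced to be super--linear by \cite[Theorem~8]{BT_LATA18} is also correct and a good sanity check. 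What the paper's route buys is a much lighter verification and the result uniformly for every $BS(p,q)$, though a single example suffices for the existential statement.
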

 \begin{proof}     
 	Let us consider the Baumslag--Solitar 
 	groups $BS(p,q) =$ 
 	$\langle a,t \,|$ $t a^p t^{-1}  = a^q \rangle$
 	for $1 \leqslant p < q$. 
 	These groups are not automatic \cite{Epsteinbook}, but
 	they are Cayley automatic  
 	\cite{KKM11,dlt14}. 
 	By \cite[Corollary~2.4]{eastwest19}, for a 
 	Cayley automatic representation 
 	$\psi : L \rightarrow BS(p,q)$ and 
 	a mapping $\alpha: \Sigma  \rightarrow BS(p,q)$ 
 	the corresponding function $h_{\psi,\alpha}$ 
 	given by \eqref{h_function_def} 
 	is coarsely greater or equal than $\mathfrak{i}$: 
 	$\mathfrak{i} \preceq h_{\psi,\alpha}$. 
 	We will show that for the Baumslag--Solitar group   
 	$BS(p,q)$ there are  
 	a $\REG$--Cayley polynomial--time 
 	computable representation 
 	and a mapping 
 	for which the \distfun\  
 	vanishes.   
 	
 	As a HNN extension of the infinite cyclic 
 	group the Baumslag--Solitar group 
 	$BS(p,q)$ 
 	admits the following normal form, see, e.g., 
 	\cite[Chapter~IV]{LyndonSchuppbook}.       
 	Every group element $g \in BS(p,q)$ can 
 	be uniquely written as a  
 	freely reduced word over the alphabet  
 	$\Sigma = \{a, a^{-1}, t, t^{-1}\}$
 	of the form
 	$
 	w_\ell t^{\varepsilon_\ell} \dots
 	w_1 t^{\varepsilon_1} a^k,    
 	$
 	where $\varepsilon_i \in \{+1,-1\}$, $k \in \mathbb{Z}$,  
 	$w_i = \{\epsilon, a, \dots, a^{p-1}\}$  
 	if $\varepsilon_i  = -1$ and
 	$w_i = \{\epsilon, a, \dots, a^{q-1}\}$ 
 	if $\varepsilon_i = +1$. 
 	The language $L$ of such normal forms is clearly 
 	regular.
 	For a bijection between the language $L$ and the 
 	group $BS(p,q)$ we take the canonical mapping: 
 	$\pi : L \rightarrow G$.  
 	The right multiplications by $a$ and 
 	$a^{-1}$ are as follows: 
 	$
 	w_\ell t^{\varepsilon_\ell} \dots
 	w_1 t^{\varepsilon_1} a^k \xrightarrow {\times a}
 	w_\ell t^{\varepsilon_\ell} \dots  
 	w_1 t^{\varepsilon_1} a^{k+1}$,
 	$   
 	w_\ell t^{\varepsilon_\ell} \dots
 	w_1 t^{\varepsilon_1} a^k \xrightarrow {\times a^{-1}}
 	w_\ell t^{\varepsilon_\ell} \dots  
 	w_1 t^{\varepsilon_1} a^{k-1}.
 	$
 	Let $k = m q + r$ for $m \in \mathbb{Z}$ and 
 	$r \in \{0,1,\dots,q-1\}$. 
 	The right multiplication by 
 	$t$ is as follows (different cases are
 	considered separately):    
 	\begin{itemize}
 		\item{if $r \neq 0$, then  
 			$w_\ell t^{\varepsilon_\ell} \dots
 			w_1 t^{\varepsilon_1} a^k \xrightarrow {\times t}
 			w_\ell t^{\varepsilon_\ell} \dots
 			w_1 t^{\varepsilon_1} a^r t a^{mp};$} 
 		\item{if $r = 0$, $\ell \geqslant 1$ and 
 			$\varepsilon_1 = +1$, then
 			$$w_\ell t^{\varepsilon_\ell} \dots
 			w_1 t^{\varepsilon_1} a^k \xrightarrow {\times t} 
 			w_\ell t^{\varepsilon_\ell} \dots
 			w_1 t t a^{mp};
 			$$
 		}  
 		\item{if $r=0$, $\ell \geqslant 1$ and 
 			$\varepsilon_1 = -1$, then 
 			$$  
 			w_\ell t^{\varepsilon_\ell} \dots
 			w_1 t^{\varepsilon_1} a^k \xrightarrow {\times t}
 			w_\ell t^{\varepsilon_\ell} \dots
 			w_2 t^{\varepsilon_2} w_1 a^{mp}; 
 			$$
 		} 
 		\item{if $r=0$ and $\ell=0$, then
 			$a^k \xrightarrow {\times t} t a^{mp}$. 
 		}  
 	\end{itemize}
 	Let $k= n p  + s$ for $n \in \mathbb{Z}$ and $s \in  \{0,1,\dots,p-1\}$. The right multiplication by 
 	$t^{-1}$ is as follows: 
 	\begin{itemize} 
 		\item{if $s \neq 0$, then 
 			$w_\ell t^{\varepsilon_\ell} \dots 
 			w_1 t^{\varepsilon_1}a^k \xrightarrow{\times t^{-1}} w_\ell t^{\varepsilon_\ell} \dots 
 			w_1 t^{\varepsilon_1} a^s t^{-1} a^{nq}$;}	
 		\item{if $s=0$, $\ell \geqslant 1$ and 
 			$\varepsilon_1 = +1$, then 
 			$$w_\ell t^{\varepsilon_\ell} \dots 
 			w_1 t^{\varepsilon_1} a^k \xrightarrow{\times 
 				t^{-1}}
 			w_\ell t^{\varepsilon_\ell} \dots 
 			w_2 t^{\varepsilon_2} w_1 a^{nq};$$}
 		\item{if $s=0$, $\ell \geqslant 1$ and $\varepsilon_1 = -1$, then $$w_\ell t^{\varepsilon_\ell} \dots 
 			w_1 t^{\varepsilon_1}a^k \xrightarrow{\times 
 				t^{-1}} w_\ell t^{\varepsilon_\ell} \dots
 			w_1 t^{-1}t^{-1} a^{nq};$$}  
 		\item{if $s=0$ and $\ell = 0$, then $a^k 
 			\xrightarrow{\times t^{-1}} t^{-1} a^{nq}$.}  
 	\end{itemize}	
 	It can be seen that each of the right multiplications 
 	by $a,a^{-1},t$ and $t^{-1}$ shown above 
    is polynomial--time computable. 	
 	Therefore, $\pi : L \rightarrow BS(p,q) $ is 
 	a $\REG$--Cayley polynomial--time computable 
 	representation. Moreover, for  
 	$\pi: L \rightarrow BS(p,q)$ and a natural mapping 
 	$\alpha: \Sigma \rightarrow BS(p,q)$, for 
 	which $\alpha(a) = a$, $\alpha(a^{-1}) = a^{-1}$, 
 	$\alpha (t) = t$ and $\alpha(t^{-1})=t^{-1}$, 
 	the \distfun\ $h_{\pi,\alpha}$ 
 	vanishes.
 \end{proof} 
 \begin{remark}	
 	It follows from the metric 
 	estimates for the Baumslag--Solitar group 
 	$BS(p,q)$ obtained by
 	Burillo and the second author \cite{BurilloElder14} 
 	that the 
 	$\REG$--Cayley polynomial--time 
 	computable representation
 	$\pi : L \rightarrow BS(p,q)$ from the  
 	proof of Theorem \ref{baumslagthm}
 	does not have quasigeodesic normal form; 
 	see also a proof of the analogous fact  in \cite[p.~317]{ElderTabackCgraph}.  	
 \end{remark}


\section{Conclusion} 
\label{sec_conclusion}    
In this paper we introduced the notion 
of a 
\ClinearC\ 
and a $\mathcal{C}$--Cayley polynomial--time 
computable group which  extend
the notion of a Cayley automatic group 
introduced by Kharlampovich, Khoussainov and 
Miasnikov. We proved some algorithmic and 
closure properties for these groups, 
and showed examples. 
We analysed behaviour of the \distfun\ for  
\linearC\ and $\REG$--\polyC\ representations.  
For future work we plan to focus  
on the classes of \linearC\ and 
$\REG$--\polyC\ groups.

\section*{Acknowledgements} 
 The second author is supported by Australian Research Council grant DP160100486.  
 The authors thank the anonymous reviewers for their helpful feedback and careful proofreading.


\bibliographystyle{elsarticle-num}

\bibliography{cayleypolbibfile}

\end{document}